\def\@cite#1#2{[{{\bfseries #1}\if@tempswa , #2\fi}]}
\renewcommand{\section}{%
\@startsection{section}{1}{\z@}
{0.5truecm plus -1ex minus -.2ex}%
{1.0ex plus .2ex}{\bfseries\large}}
\def\@seccntformat#1{\csname the#1\endcsname.\ }
\numberwithin{equation}{section} 
\theoremstyle{theorem}
\newtheorem{thm}{Theorem}[section]
\newtheorem{lem}[thm]{Lemma}
\theoremstyle{definition}
\newtheorem{df}{Definition}[section]
\newtheorem{remark}{Remark}[section]
\newtheorem*{prth1.1}{Proof of Theorem 1.1}
\newtheorem*{prth1.2}{Proof of Theorem 1.2}
\newtheorem*{prth1.3}{Proof of Theorem 1.3}
\newcommand{\ep}{\varepsilon}
\let\widehat\widehat
\def\Pi{\widehat\pi}
\begin{document}
\footnote[0]
    {2010 {\it Mathematics Subject Classification}\/: 
    35G31, 80A22.        
    }
\footnote[0] 
    {{\it Key words and phrases}\/: 
    nonlocal to local convergence; 
    singular phase field systems of conserved type; 
    existence.         
} 
\begin{center}
    \Large{{\bf 
Nonlocal to local convergence of singular phase field \\ systems 
of conserved type}}
\end{center}
\vspace{5pt}
\begin{center}
    Shunsuke Kurima%
    \\
    \vspace{2pt}
    Department of Mathematics, 
    Tokyo University of Science\\
    1-3, Kagurazaka, Shinjuku-ku, Tokyo 162-8601, Japan\\
    {\tt shunsuke.kurima@gmail.com}\\
    \vspace{2pt}
\end{center}
\begin{center}    
    \small \today
\end{center}

\vspace{2pt}
\newenvironment{summary}
{\vspace{.5\baselineskip}\begin{list}{}{%
     \setlength{\baselineskip}{0.85\baselineskip}
     \setlength{\topsep}{0pt}
     \setlength{\leftmargin}{12mm}
     \setlength{\rightmargin}{12mm}
     \setlength{\listparindent}{0mm}
     \setlength{\itemindent}{\listparindent}
     \setlength{\parsep}{0pt}
     \item\relax}}{\end{list}\vspace{.5\baselineskip}}
\begin{summary}
{\footnotesize {\bf Abstract.} 
This paper deals with 
a singular nonlocal phase field system of conserved type.
Colli--K.\ [Nonlinear Anal.\ 190 (2020)] have derived existence of solutions to 
a singular phase field system of conserved type. 
On the other hand, 
Davoli--Scarpa--Trussardi [Arch. Ration. Mech. Anal.\ 239 (2021)] have studied 
nonlocal to local convergence of Cahn-Hilliard equations. 
In this paper 
we prove existence of solutions to 
a nonlocal singular phase field system of conserved type 
whose kernel is not $W^{1, 1}$ 
and  
focus on nonlocal to local convergence of 
singular phase field systems of conserved type.   
}
\end{summary}
\vspace{10pt}

\newpage

\section{Introduction}\label{Sec1}

Phase field systems of conserved type have been studied. 
In particular, existence of solutions to 
the singular phase field system of conserved type 
\begin{equation*}\tag{E0}\label{E0}
     \begin{cases}
         (\ln  \theta)_{t} + \varphi_{t} - \Delta\theta = f   
         & \mbox{in}\ \Omega\times(0, T), 
 \\[2.5mm]
         \varphi_{t} - \Delta \mu = 0 
         & \mbox{in}\ \Omega\times(0, T), 
 \\[2.5mm]
         \mu = \tau\varphi_{t} - \Delta \varphi + \xi + \pi(\varphi) - \theta, \ 
         \xi \in \beta(\varphi)     
         & \mbox{in}\ \Omega\times(0, T), 
 \\[2.5mm]
         \partial_{\nu}\theta + \theta = \theta_{\Gamma}, \ 
         \partial_{\nu}\mu = \partial_{\nu}\varphi = 0                                   
         & \mbox{on}\ \Gamma\times(0, T),
 \\[2.5mm]
        (\ln \theta)(0) = \ln \theta_{0},\ \varphi(0)=\varphi_{0}                                          
         & \mbox{in}\ \Omega  
     \end{cases}
\end{equation*}
has been proved by Colli--K. \cite{CK2}, 
where $\tau \geq 0$, 
$\Omega$ is a bounded domain in $\mathbb{R}^d$ ($d = 1, 2, 3$)
with smooth boundary $\Gamma:=\partial\Omega$, 
$T > 0$, 
$\beta \subset  \mathbb{R}\times\mathbb{R}$ is a maximal monotone graph, 
$\pi : \mathbb{R}\to\mathbb{R}$ is an anti-monotone function, 
$\partial_\nu$ denotes differentiation with respect to
the outward normal of $\partial\Omega$, 
and 
$f$, $\theta_{\Gamma}$, $\theta_{0}$, $\varphi_{0}$ are given functions. 
On the other hand, 
in the case that $d = 2, 3$, 
for the two problems 
\begin{equation*}\tag*{(E1)$_{\ep}$}\label{E1ep}
     \begin{cases}
         (\varphi_{\ep})_{t} - \Delta \mu_{\ep} = 0 
         & \mbox{in}\ \Omega\times(0, T), 
 \\[2.5mm]
         \mu_{\ep} = \tau_{\ep}(\varphi_{\ep})_{t} 
         + (K_{\ep}\ast1)\varphi_{\ep} - K_{\ep}\ast\varphi_{\ep} 
 \notag \\ 
         \hspace{50mm} +\ \xi_{\ep} + \pi(\varphi_{\ep}) - g_{\ep}, \ 
         \xi_{\ep} \in \beta(\varphi_{\ep})     
         & \mbox{in}\ \Omega\times(0, T), 
 \\[2.5mm]
         \partial_{\nu}\mu_{\ep} = 0                                   
         & \mbox{on}\ \Gamma\times(0, T),
 \\[2.5mm]
        \varphi_{\ep}(0)=\varphi_{0, \ep}                                          
         & \mbox{in}\ \Omega   
     \end{cases}
\end{equation*}
and 
\begin{equation*}\tag{E1}\label{E1}
     \begin{cases}
         \varphi_{t} - \Delta \mu = 0 
         & \mbox{in}\ \Omega\times(0, T), 
 \\[1.5mm]
         \mu = \tau\varphi_{t} - \Delta\varphi 
                  + \xi + \pi(\varphi) - g, \ \xi \in \beta(\varphi)     
         & \mbox{in}\ \Omega\times(0, T), 
 \\[1.5mm]
         \partial_{\nu}\mu = \partial_{\nu}\varphi = 0                                   
         & \mbox{on}\ \Gamma\times(0, T),
 \\[1.5mm]
        \varphi(0)=\varphi_{0}                                          
         & \mbox{in}\ \Omega,   
     \end{cases}
\end{equation*}
Davoli--Scarpa--Trussardi \cite{DST2021} have confirmed 
that existence of solutions to \eqref{E1} 
can be derived 
by passing to the limit in \ref{E1ep} as $\ep = \ep_{j} \searrow 0$, 
where $\{\ep_{j}\}_{j}$ is a subsequence of $\{\ep\}_{\ep > 0}$. 
Here $\ep > 0$, $\{\tau_{\ep}\}_{\ep > 0} \subset (0, +\infty)$, 
$\tau_{\ep} \to \tau$ as $\ep \searrow 0$, 
$g$, $g_{\ep}$, $\varphi_{0, \ep}$ are given functions, 
and $K_{\ep}$ is a kernel function which is not $W^{1, 1}$. 
However, 
nonlocal singular phase field systems of conserved type whose kernel is not $W^{1, 1}$ 
and 
nonlocal to local convergence of singular phase field systems of conserved type 
seem not be studied yet. 

\bigskip

In this paper we consider the two problems  
%
%
 \begin{equation*}\tag*{(P)$_{\ep}$}\label{Pep}
     \begin{cases}
         (\ln  \theta_{\ep})_{t} + (\varphi_{\ep})_{t} - \Delta\theta_{\ep} = f   
         & \mbox{in}\ \Omega\times(0, T), 
 \\[1.5mm]
         (\varphi_{\ep})_{t} - \Delta \mu_{\ep} = 0 
         & \mbox{in}\ \Omega\times(0, T), 
 \\[1.5mm]
         \mu_{\ep} = (\varphi_{\ep})_{t} 
         + (K_{\ep}\ast1)\varphi_{\ep} - K_{\ep}\ast\varphi_{\ep} 
 \notag \\ 
         \hspace{50mm} +\ \xi_{\ep} + \pi(\varphi_{\ep}) - \theta_{\ep}, \ 
         \xi_{\ep} \in \beta(\varphi_{\ep})     
         & \mbox{in}\ \Omega\times(0, T), 
 \\[1.5mm]
         \partial_{\nu}\theta_{\ep} + \theta_{\ep} = \theta_{\Gamma}, \ 
         \partial_{\nu}\mu_{\ep} = 0                                   
         & \mbox{on}\ \Gamma\times(0, T),
 \\[1.5mm]
        (\ln \theta_{\ep})(0) = \ln \theta_{0},\ 
        \varphi_{\ep}(0)=\varphi_{0, \ep}                                          
         & \mbox{in}\ \Omega  
     \end{cases}
\end{equation*}
and 
%
%
 \begin{equation*}\tag{P}\label{P}
     \begin{cases}
         (\ln  \theta)_{t} + \varphi_{t} - \Delta\theta = f   
         & \mbox{in}\ \Omega\times(0, T), 
 \\[1.5mm]
         \varphi_{t} - \Delta \mu = 0 
         & \mbox{in}\ \Omega\times(0, T), 
 \\[1.5mm]
         \mu = \varphi_{t} - \Delta\varphi 
                  + \xi + \pi(\varphi) - \theta, \ \xi \in \beta(\varphi)     
         & \mbox{in}\ \Omega\times(0, T), 
 \\[1.5mm]
         \partial_{\nu}\theta + \theta = \theta_{\Gamma}, \  
         \partial_{\nu}\mu = \partial_{\nu}\varphi = 0                                   
         & \mbox{on}\ \Gamma\times(0, T),
 \\[1.5mm]
        (\ln \theta)(0) = \ln \theta_{0},\ 
        \varphi(0)=\varphi_{0}                                          
         & \mbox{in}\ \Omega,   
     \end{cases}
\end{equation*}
prove existence of solutions to \ref{Pep} 
and verify that existence of solutions to \eqref{P} 
can be established 
by passing to the limit in \ref{Pep}. 
Here $\Omega$ is a bounded domain in $\mathbb{R}^d$ ($d = 2, 3$)
with smooth boundary $\Gamma:=\partial\Omega$. 
Moreover, in this paper we assume that 
\begin{enumerate} 
\setlength{\itemsep}{1mm}
\item[{\bf C1}] 
$\rho_{\ep} : \mathbb{R} \to [0, +\infty)$, 
$\rho_{\ep} \in L_{\mathrm{loc}}^{1}(\mathbb{R})$, 
$\rho_{\ep}(r) = \rho_{\ep}(-r)$ ($\forall r \in \mathbb{R}$, $\forall \ep >0$), 
\[
\int_{0}^{+\infty} \rho_{\ep}(r)r^{d-1}\,dr = c_{d}
\quad (\forall \ep > 0), 
\qquad 
\lim_{\ep \searrow 0} \int_{\delta}^{+\infty} \rho_{\ep}(r)r^{d-1}\,dr = 0 
\quad (\forall \delta > 0),  
\]
where $c_{d} := \dfrac{2}{\int_{S^{d-1}}|e_{1}\cdot\sigma|^2\,d{\cal H}^{d-1}(\sigma)}$. 
The function $K_{\ep} : \Omega\times\Omega \to [0, +\infty)$ is defined as  
$K_{\ep}(x, y) := \dfrac{\rho_{\ep}(|x-y|)}{|x-y|^2}$ for a.a.\ $x, y \in \Omega$.   
\item[{\bf C2}] $\beta \subset \mathbb{R}\times\mathbb{R}$ 
is a maximal monotone graph, with effective domain $D(\beta)$, which coincides with
the subdifferential $\partial\widehat{\beta}$ of a proper lower semicontinuous convex function $\widehat{\beta} : \mathbb{R} \to [0, +\infty]$, 
with effective domain $D(\widehat{\beta}) \supseteq D(\beta) $,  
such that $\widehat{\beta}(0) = 0$.   
\item[{\bf C3}] $\pi : \mathbb{R} \to \mathbb{R}$ is a Lipschitz continuous function. 
\item[{\bf C4}] $f \in L^2(\Omega\times(0, T))$. 
\item[{\bf C5}] $\theta_{\Gamma} \in L^{\infty}(\Gamma\times(0, T))$, 
                $\theta_{0} \in L^{\infty}(\Omega)$   
                and there exist constants $\theta^{*}, \theta_{*} > 0$ such that 
                \[
                \theta_{*} \leq \theta_{\Gamma} \leq \theta^{*} 
                \quad\mbox{a.e.\ on}\ \Gamma\times(0, T) 
                \qquad \mbox{and}\qquad  
                \theta_{*} \leq \theta_0 \leq \theta^{*} 
                \quad\mbox{a.e.\ in}\ \Omega.  
                \]
\item[{\bf C6}] $\varphi_0 \in H^1(\Omega)$ 
                 and $\widehat{\beta}(\varphi_{0}) \in L^1(\Omega)$; 
                 moreover, the mean value 
                 $(\varphi_{0})_{\Omega} := \frac{1}{|\Omega|}\int_{\Omega}\varphi_{0}$ 
                 lies in $\mbox{Int}\,D(\beta)$. 
\item[{\bf C7}] Let $\varphi_{0, \ep} \in L^2(\Omega)$ satisfy that 
there exists a constant $c_{1} > 0$ such that 
\[ 
\|\varphi_{0, \ep}\|_{L^2(\Omega)}^2 \leq c_{1},\  
\int_{\Omega}\int_{\Omega} 
                    K_{\ep}(x, y)|\varphi_{0, \ep}(x) - \varphi_{0, \ep}(y)|^2\,dxdy \leq c_{1},\ 
\|\widehat{\beta}(\varphi_{0, \ep})\|_{L^1(\Omega)} \leq c_{1} 
\]
for all $\ep \in (0, 1)$ 
and there exist constants $a_{0}, b_{0} \in \mathbb{R}$ such that 
$[a_{0}, b_{0}] \subset \mbox{Int}\,D(\beta)$ and 
$a_{0} \leq (\varphi_{0, \ep})_{\Omega} \leq b_{0}$ for all $\ep \in (0, 1)$. 
In addition, 
$\varphi_{0, \ep} \to \varphi_{0}$ weakly in $L^2(\Omega)$ as $\ep \searrow 0$.   
\end{enumerate}
\begin{remark}
The kernel function $K_{\ep}$ in {\bf C1} is not $W^{1, 1}$. 
\end{remark}

\smallskip

To show existence for \ref{Pep} 
we deal with the approximate problem 
%
%
 \begin{equation*}\tag*{(P)$_{\ep, \lambda}$}\label{Peplam}
     \begin{cases}
         \bigl(\mbox{\rm Ln$_{\lambda}$}(\theta_{\ep, \lambda})\bigr)_{t}  
                                              + (\varphi_{\ep, \lambda})_{t}  
         - \Delta\theta_{\ep, \lambda} = f   
         & \mbox{in}\ \Omega\times(0, T), 
 \\[1.5mm]
         (\varphi_{\ep, \lambda})_{t} - \Delta \mu_{\ep, \lambda} = 0 
         & \mbox{in}\ \Omega\times(0, T), 
 \\[1.5mm]
         \mu_{\ep, \lambda} = (\varphi_{\ep, \lambda})_{t} 
                                      - \lambda\Delta\varphi_{\ep, \lambda} 
         + (K_{\ep}\ast1)\varphi_{\ep, \lambda} - K_{\ep}\ast\varphi_{\ep, \lambda} 
 \notag \\ 
         \hspace{55mm} +\ \beta_{\lambda}(\varphi_{\ep, \lambda}) 
         + \pi(\varphi_{\ep, \lambda}) 
         - \theta_{\ep, \lambda}    
         & \mbox{in}\ \Omega\times(0, T), 
 \\[1.5mm]
         \partial_{\nu}\theta_{\ep, \lambda} + \theta_{\ep, \lambda} = \theta_{\Gamma}, \ 
         \partial_{\nu}\mu_{\ep, \lambda} = \partial_{\nu}\varphi_{\ep, \lambda} = 0                                   
         & \mbox{on}\ \Gamma\times(0, T),
 \\[1.5mm]
        (\mbox{\rm Ln$_{\lambda}$}(\theta_{\ep, \lambda}))(0) 
        = \mbox{\rm Ln$_{\lambda}$}(\theta_0),\ 
        \varphi_{\ep, \lambda}(0)=\varphi_{0, \ep, \lambda}                                          
         & \mbox{in}\ \Omega, 
     \end{cases}
\end{equation*}
where $\lambda > 0$, 
$\mbox{\rm Ln$_{\lambda}$}(r):=\lambda r + \ln_{\lambda}(r)$ 
for $r \in \mathbb{R}$, 
and 
$\ln_{\lambda}$ is the Yosida approximation operator of $\ln$ on $\mathbb{R}$,   
$\beta_{\lambda} : \mathbb{R} \to \mathbb{R}$ is the 
Yosida approximation operator of $\beta$ on $\mathbb{R}$. 
Moreover, in this paper we assume further that  
\begin{enumerate} 
\item[{\bf C8}] Let $\varphi_{0, \ep, \lambda} \in H^1(\Omega)$ 
satisfy that 
$(\varphi_{0, \ep, \lambda})_{\Omega} = (\varphi_{0, \ep})_{\Omega}$ 
for all $\ep \in (0, 1)$ and all $\lambda \in (0, 1)$, and 
there exists a constant $c_{2} > 0$ such that 
\[
\lambda\|\varphi_{0, \ep, \lambda}\|_{H^1(\Omega)}^2 \leq c_{2},\ 
\|\widehat{\beta}(\varphi_{0, \ep, \lambda})\|_{L^1(\Omega)} \leq c_{2}
\]
for all $\ep \in (0, 1)$ and all $\lambda \in (0, 1)$. 
Moreover, $\varphi_{0, \ep, \lambda} \to \varphi_{0, \ep}$ strongly in $L^2(\Omega)$ 
as $\lambda \searrow 0$ for all $\ep \in (0, 1)$. 
\end{enumerate}

\bigskip

Let us define the Hilbert spaces 
   $$
   H:=L^2(\Omega), \quad V:=H^1(\Omega)
   $$
 with inner products 
 \begin{align*} 
 &(u_{1}, u_{2})_{H}:=\int_{\Omega}u_{1}u_{2}\,dx \quad \mbox{for}\ u_{1}, u_{2} \in H, \\
 &(v_{1}, v_{2})_{V}:=
 \int_{\Omega}\nabla v_{1}\cdot\nabla v_{2}\,dx + \int_{\Omega} v_{1}v_{2}\,dx 
 \quad \mbox{for}\ v_{1}, v_{2} \in V,
\end{align*}
 respectively,
 and with the related Hilbertian norms. 
 Moreover, we use the notation 
   $$
   W:=\bigl\{z\in H^2(\Omega)\ |\ \partial_{\nu}z = 0 \quad 
   \mbox{a.e.\ on}\ \partial\Omega\bigr\}.
   $$ 
 The notation $V^{*}$ denotes the dual space of $V$ with 
 duality pairing $\langle\cdot, \cdot\rangle_{V^*, V}$. 
 Moreover, in this paper, the bijective mapping $F : V \to V^{*}$ and 
 the inner product in $V^{*}$ are defined as 
    \begin{align*}
    &\langle Fv_{1}, v_{2} \rangle_{V^*, V} := 
    (v_{1}, v_{2})_{V} \quad \mbox{for}\ v_{1}, v_{2}\in V,   
    \\[1mm]
    &(v_{1}^{*}, v_{2}^{*})_{V^{*}} := 
    \left\langle v_{1}^{*}, F^{-1}v_{2}^{*} 
    \right\rangle_{V^*, V} 
    \quad \mbox{for}\ v_{1}^{*}, v_{2}^{*}\in V^{*}.   
    \end{align*}
This article employs the Hilbert space 
  $$
  V_{0}:=\left\{ z \in H^1(\Omega)\ \Big{|} \ \int_{\Omega} z = 0 \right\}   
  $$
 with inner product 
 \begin{align*} 
 (v_{1}, v_{2})_{V_{0}}:=
 \int_{\Omega}\nabla v_{1}\cdot\nabla v_{2}\,dx 
 \quad \mbox{for}\ v_{1}, v_{2} \in V_{0} 
\end{align*}
 and with the related Hilbertian norm.
 The notation $V_{0}^{*}$ denotes the dual space of $V_{0}$ with 
 duality pairing $\langle\cdot, \cdot\rangle_{V_{0}^*, V_{0}}$. 
 Moreover, in this paper, the bijective mapping ${\cal N} : V_{0}^{*} \to V_{0}$ and 
 the inner product in $V_{0}^{*}$ are specified by  
    \begin{align*}
    &\langle v^{*}, v \rangle_{V_{0}^*, V_{0}} := 
    \int_{\Omega} \nabla {\cal N}v^{*} \cdot \nabla v 
    \quad \mbox{for}\ v^{*} \in V_{0}^*, v \in V_{0},   
    \\[3mm]
    &(v_{1}^{*}, v_{2}^{*})_{V_{0}^{*}} := 
    \left\langle v_{1}^{*}, {\cal N}v_{2}^{*} 
    \right\rangle_{V_{0}^*, V_{0}} 
    \quad \mbox{for}\ v_{1}^{*}, v_{2}^{*}\in V_{0}^{*}.  
    \end{align*}
For $\ep > 0$ we define 
\[
V_{\ep} := \left\{ \varphi \in H  \ \Bigg{|} \ 
                              \int_{\Omega}\int_{\Omega} 
                                 K_{\ep}(x, y)|\varphi(x) - \varphi(y)|^2\,dxdy < + \infty \right\}
\]
and 
\[
E_{\ep}(\varphi) := \frac{1}{4}\int_{\Omega}\int_{\Omega} 
                                              K_{\ep}(x, y)|\varphi(x) - \varphi(y)|^2\,dxdy   
\quad \mbox{for}\ \varphi \in V_{\ep}. 
\]
We set $a_{\ep} : V_{\ep} \times V_{\ep} \to \mathbb{R}$ as 
\[
a_{\ep}(\varphi, \psi) := 
\frac{1}{2}\int_{\Omega}\int_{\Omega} 
                 K_{\ep}(x, y)(\varphi(x) - \varphi(y))(\psi(x) - \psi(y))\,dxdy  
\quad \mbox{for}\ \varphi, \psi \in V_{\ep}
\]
and define 
\begin{align*}
&W_{\ep} := 
\{ \varphi \in V_{\ep}  \ | \ 
                     \exists g \in H \ \mbox{s.t.}\ 
                     a_{\ep}(\varphi, \psi) = (g, \psi)_{H}\ \mbox{for all}\ \psi \in V_{\ep} \}, 
\\[5mm]
&B_{\ep}(\varphi)(x) := \int_{\Omega} K_{\ep}(x, y)(\varphi(x) - \varphi(y))\,dy 
\quad \mbox{for a.a.}\ x \in \Omega \ \mbox{and all}\ \varphi \in W_{\ep}. 
\end{align*}
Moreover, we set 
\begin{align*}
&\|\varphi\|_{V_{\ep}} 
  := \sqrt{\|\varphi\|_{H}^2 + 2E_{\ep}(\varphi)} 
    \quad \mbox{for}\ \varphi \in V_{\ep}, 
\\[3mm]
&\|\varphi\|_{W_{\ep}} 
  := \sqrt{\|\varphi\|_{H}^2 + \|B_{\ep}(\varphi)\|_{H}^2} 
   \quad \mbox{for}\ \varphi \in W_{\ep}, 
\\[3mm]
&(\varphi_{1}, \varphi_{2})_{V_{\ep}} 
  := (\varphi_{1}, \varphi_{2})_{H} + a_{\ep}(\varphi_{1}, \varphi_{2}) 
\quad \mbox{for}\ \varphi_{1}, \varphi_{2} \in V_{\ep}, 
\\[3mm]
&(\varphi_{1}, \varphi_{2})_{W_{\ep}} 
  := (\varphi_{1}, \varphi_{2})_{H} + (B_{\ep}(\varphi_{1}), B_{\ep}(\varphi_{2}))_{H}
\quad \mbox{for}\ \varphi_{1}, \varphi_{2} \in W_{\ep}.  
\end{align*}

\bigskip

We define weak solutions of \ref{Peplam}, \ref{Pep} and \eqref{P} as follows. 
%
%
%
 \begin{df}
 A triplet $(\theta_{\ep, \lambda}, \mu_{\ep, \lambda}, \varphi_{\ep, \lambda})$ with 
    \begin{align*}
    &\theta_{\ep, \lambda} \in L^2(0, T; V),\ 
      \mbox{\rm Ln$_{\lambda}$}(\theta_{\ep, \lambda}) 
                                                    \in H^1(0, T; V^{*}) \cap L^{\infty}(0, T; H), \\ 
   &\mu_{\ep, \lambda} \in L^2(0, T; W), \\ 
   &\varphi_{\ep, \lambda} 
      \in H^1(0, T; H) \cap L^{\infty}(0, T; V) \cap L^2(0, T; W) 
    \end{align*}
 is called a {\it weak solution} of \ref{Peplam} 
 if $(\theta_{\ep, \lambda}, \mu_{\ep, \lambda}, \varphi_{\ep, \lambda})$ satisfies 
    \begin{align}
        &\langle \bigl(\mbox{\rm Ln$_{\lambda}$}(\theta_{\ep, \lambda})\bigr)_{t}, 
                                                                                       w \rangle_{V^{*}, V}
           + ((\varphi_{\ep, \lambda})_{t}, w)_{H} 
           + \int_{\Omega} \nabla \theta_{\ep, \lambda} \cdot \nabla w 
      \notag \\
      &\hspace{35mm} + \int_{\Gamma} (\theta_{\ep, \lambda} - \theta_{\Gamma})w 
      = (f, w)_{H} 
                 \quad  \mbox{a.e.\ in}\ (0, T) 
                           \ \  \mbox{for all}\ w \in V,  \label{dfsoleplam1}
     \\[3.5mm]
        &(\varphi_{\ep, \lambda})_{t} - \Delta \mu_{\ep, \lambda} = 0 
                 \quad  \mbox{a.e.\ in}\ \Omega\times(0, T),  \label{dfsoleplam2}
    \\[2mm]
        &\mu_{\ep, \lambda} = (\varphi_{\ep, \lambda})_{t} 
                                        - \lambda \Delta \varphi_{\ep, \lambda} 
                                        + B_{\ep}(\varphi_{\ep, \lambda}) 
                                        + \beta_{\lambda}(\varphi_{\ep, \lambda}) 
                                        + \pi(\varphi_{\ep, \lambda}) - \theta_{\ep, \lambda} 
          \quad \mbox{a.e.\ in}\ \Omega\times(0, T), \label{dfsoleplam3}
     \\[2mm]
     &\partial_{\nu}\mu_{\ep, \lambda} = \partial_{\nu}\varphi_{\ep, \lambda} = 0 
         \quad \mbox{a.e.\ on}\ \Gamma\times(0, T),    \label{dfsoleplam4}
     \\[2mm]
        &(\mbox{\rm Ln$_{\lambda}$}(\theta_{\ep, \lambda}))(0) 
                                                   = \mbox{\rm Ln$_{\lambda}$}(\theta_{0}),\ 
          \varphi_{\ep, \lambda}(0) = \varphi_{0, \ep, \lambda} 
                                                \quad \mbox{a.e.\ in}\ \Omega.  \label{dfsoleplam5}
     \end{align}
 \end{df}
%
%
%
 \begin{df}         
 A quadruple $(\theta_{\ep}, \mu_{\ep}, \varphi_{\ep}, \xi_{\ep})$ with 
    \begin{align*}
    &\theta_{\ep} \in L^2(0, T; V),\ 
      \ln\theta_{\ep} \in H^1(0, T; V^{*}) \cap L^{\infty}(0, T; H), \\ 
   &\mu_{\ep} \in L^2(0, T; W), \\ 
   &\varphi_{\ep} \in H^1(0, T; H) \cap L^{\infty}(0, T; V_{\ep}) \cap L^2(0, T; W_{\ep}), \\
   &\xi_{\ep} \in L^2(0, T; H)
    \end{align*}
 is called a {\it weak solution} of \ref{Pep} 
 if $(\theta_{\ep}, \mu_{\ep}, \varphi_{\ep}, \xi_{\ep})$ satisfies 
    \begin{align*}
        &\langle (\ln\theta_{\ep})_{t}, w \rangle_{V^{*}, V}
           + ((\varphi_{\ep})_{t}, w)_{H} + \int_{\Omega} \nabla \theta_{\ep} \cdot \nabla w 
      \\
      &\hspace{47mm} + \int_{\Gamma} (\theta_{\ep} - \theta_{\Gamma})w = (f, w)_{H} 
                 \quad  \mbox{a.e.\ in}\ (0, T) 
                           \ \  \mbox{for all}\ w \in V,  
     \\[3.5mm]
        &(\varphi_{\ep})_{t} - \Delta \mu_{\ep} = 0 
           \quad \mbox{a.e.\ in}\ \Omega\times(0, T),  
    \\[2mm]
        &\mu_{\ep} = (\varphi_{\ep})_{t} + B_{\ep}(\varphi_{\ep}) 
                                                   + \xi_{\ep} + \pi(\varphi_{\ep}) - \theta_{\ep},\ 
                                                                         \xi_{\ep} \in \beta(\varphi_{\ep}) 
                               \quad \mbox{a.e.\ in}\ \Omega\times(0, T), 
     \\[2mm]
        &(\ln\theta_{\ep})(0) = \ln\theta_0,\ \varphi_{\ep}(0) = \varphi_{0, \ep}
                                                \quad \mbox{a.e.\ in}\ \Omega. 
     \end{align*}
 \end{df}
%
%
%
 \begin{df}         
 A quadruple $(\theta, \mu, \varphi, \xi)$ with 
    \begin{align*}
    &\theta \in L^2(0, T; V),\ 
      \ln\theta \in H^1(0, T; V^{*}) \cap L^{\infty}(0, T; H), \\ 
    &\mu \in L^2(0, T; W), \\ 
    &\varphi \in H^1(0, T; H) \cap L^{\infty}(0, T; V) \cap L^2(0, T; W), \\
    &\xi \in L^2(0, T; H)
    \end{align*}
 is called a {\it weak solution} of \eqref{P} 
 if $(\theta, \mu, \varphi, \xi)$ satisfies 
    \begin{align*}
        &\langle (\ln\theta)_{t}, w \rangle_{V^{*}, V}
           + (\varphi_{t}, w)_{H} + \int_{\Omega} \nabla \theta \cdot \nabla w 
      \\
      &\hspace{50mm} + \int_{\Gamma} (\theta - \theta_{\Gamma})w = (f, w)_{H} 
                 \quad  \mbox{a.e.\ in}\ (0, T) 
                           \ \  \mbox{for all}\ w \in V,  
     \\[3.5mm]
        &\varphi_{t} - \Delta \mu = 0 
           \quad \mbox{a.e.\ in}\ \Omega\times(0, T), 
    \\[2mm]
        &\mu = \varphi_{t} - \Delta\varphi + \xi + \pi(\varphi) - \theta,\ 
                                                                                    \xi \in \beta(\varphi) 
                               \quad \mbox{a.e.\ in}\ \Omega\times(0, T), 
     \\[2mm]
        &(\ln\theta)(0) = \ln\theta_0,\ \varphi(0) = \varphi_0
                                                       \quad \mbox{a.e.\ in}\ \Omega. 
     \end{align*}
 \end{df}

\bigskip

Now the main results read as follows. 
\begin{thm}\label{maintheorem1}
Assume that {\rm {\bf C1}-{\bf C8}} hold. 
Then 
for all $\ep \in (0, 1)$ and all $\lambda \in (0, 1)$ 
there exists a unique weak solution 
$(\theta_{\ep, \lambda}, \mu_{\ep, \lambda}, \varphi_{\ep, \lambda})$ 
of {\rm \ref{Peplam}}. 
\end{thm}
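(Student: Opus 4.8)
The plan is to prove Theorem~\ref{maintheorem1} by a Faedo--Galerkin scheme in space, uniform a priori estimates, a compactness argument to pass to the limit, and a Gronwall argument for uniqueness. Since $\mathrm{Ln}_{\lambda}$ and $\beta_{\lambda}$ are now Lipschitz continuous, \ref{Peplam} is a non-degenerate (if strongly coupled) parabolic problem, so no maximal monotone obstruction remains at this level. For the discretization I would take $\{e_j\}_j$ to be the eigenfunctions of $-\Delta$ with homogeneous Neumann condition; they form an orthonormal basis of $H$, an orthogonal basis of $V$, lie in $W$, and, being smooth, also lie in $W_{\ep}$ with $a_{\ep}(e_i,e_j)$ finite (by {\bf C1}, $\int_{\Omega}\int_{\Omega}\rho_{\ep}(|x-y|)\,dx\,dy<\infty$). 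For each $n$ one seeks $\theta_n,\mu_n,\varphi_n$ in $\mathrm{span}\{e_1,\dots,e_n\}$ solving the Galerkin version of \eqref{dfsoleplam1}--\eqref{dfsoleplam5} with $\varphi_n(0)$ and $(\mathrm{Ln}_{\lambda}(\theta_n))(0)$ the projections of $\varphi_{0,\ep,\lambda}$ and $\mathrm{Ln}_{\lambda}(\theta_0)$; since $\mathrm{Ln}_{\lambda}$ is strictly increasing (so the mass matrix generated by $(\mathrm{Ln}_{\lambda}(\theta_n))_t$ is positive definite), $\mathrm{Ln}_{\lambda},\beta_{\lambda},\pi$ are Lipschitz, and $B_{\ep}$ is bounded on the finite-dimensional space, the Cauchy--Lipschitz theorem (applied, if necessary, after a further smoothing of $\mathrm{Ln}_{\lambda}$ that is removed in a subsequent limit) yields a unique local-in-time solution, made global on $[0,T]$ by the estimates below.

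\emph{A priori estimates.} The main estimate is obtained by testing the discrete first equation with $\theta_n$ and the discrete third equation with $(\varphi_n)_t$ and adding: the coupling integral $((\varphi_n)_t,\theta_n)_H$ coming from the first equation is cancelled by the contribution of the $-\theta_n$ term in the third equation tested with $(\varphi_n)_t$. Using $((\varphi_n)_t,\mu_n)_H=-\|\nabla\mu_n\|_H^2$ (from $(\varphi_n)_t=\Delta\mu_n$, $\partial_{\nu}\mu_n=0$), $a_{\ep}(\varphi_n,(\varphi_n)_t)=\tfrac{d}{dt}E_{\ep}(\varphi_n)$, $(\beta_{\lambda}(\varphi_n),(\varphi_n)_t)_H=\tfrac{d}{dt}\int_{\Omega}\widehat{\beta_{\lambda}}(\varphi_n)$, and $\langle(\mathrm{Ln}_{\lambda}(\theta_n))_t,\theta_n\rangle_{V^{*},V}=\tfrac{d}{dt}\int_{\Omega}\Lambda_{\lambda}(\theta_n)$ for a nonnegative convex primitive $\Lambda_{\lambda}$ dominating $\tfrac{\lambda}{2}|\cdot|^2$, together with Young's inequality, the Lipschitz bound $\|\pi(\varphi_n)\|_H\le C(1+\|\varphi_n\|_H)$, the Poincar\'e--Wirtinger inequality (the mean of $\varphi_n$ is controlled by $(\varphi_0)_{\Omega}$), the equivalence of $\|\cdot\|_V$ with $(\|\nabla\cdot\|_H^2+\|\cdot\|_{L^2(\Gamma)}^2)^{1/2}$, and Gronwall's lemma, one gets bounds --- with constants depending on $\ep,\lambda$ but not on $n$ --- for $\theta_n$ in $L^{\infty}(0,T;H)\cap L^2(0,T;V)$, for $\varphi_n$ in $H^1(0,T;H)\cap L^{\infty}(0,T;V)$, for $E_{\ep}(\varphi_n)$ and $\|\widehat{\beta_{\lambda}}(\varphi_n)\|_{L^1(\Omega)}$, and for $\nabla\mu_n$ in $L^2(0,T;H)$. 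Comparison in the equations then bounds $(\mathrm{Ln}_{\lambda}(\theta_n))_t$ in $L^2(0,T;V^{*})$; testing the third discrete equation with $1$ (and using $\int_{\Omega}B_{\ep}(\varphi_n)=a_{\ep}(\varphi_n,1)=0$) bounds the mean of $\mu_n$ in $L^2(0,T)$, so $\mu_n$ is bounded in $L^2(0,T;V)$ and, by elliptic regularity for $-\Delta\mu_n=-(\varphi_n)_t\in L^2(0,T;H)$, in $L^2(0,T;W)$. A further estimate, obtained by testing the third discrete equation with $-\Delta\varphi_n$ so as to exploit the coercive term $\lambda\|\Delta\varphi_n\|_H^2$, yields bounds for $\varphi_n$ in $L^2(0,T;W)$ and hence for $B_{\ep}(\varphi_n)$ in $L^2(0,T;H)$.

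\emph{Passage to the limit and uniqueness.} Banach--Alaoglu gives weakly/weakly-$*$ convergent subsequences in the spaces above; the Aubin--Lions--Simon lemma yields $\varphi_n\to\varphi_{\ep,\lambda}$ strongly in $C([0,T];H)$ and $\theta_n\to\theta_{\ep,\lambda}$ strongly in $L^2(0,T;H)$, which suffices to identify the limits of the Lipschitz nonlinearities $\mathrm{Ln}_{\lambda}(\theta_n)$, $\beta_{\lambda}(\varphi_n)$ and $\pi(\varphi_n)$, while the linear terms involving $-\Delta$ and $B_{\ep}$ pass to the limit by weak convergence; the initial conditions pass using the continuity in time in $H$ (resp.\ $V^{*}$). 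This produces a weak solution of \ref{Peplam} in the sense of the Definition. For uniqueness, one takes two weak solutions, subtracts the equations, tests the difference of the first equations with the difference of the $\theta$'s and the difference of the third equations with the difference of the $(\varphi)_t$'s --- again the coupling terms cancel --- invokes monotonicity of $\mathrm{Ln}_{\lambda}$ and of $\beta_{\lambda}$, Lipschitz continuity of $\pi$, and the non-negativity of $a_{\ep}$, and concludes by Gronwall's lemma.

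\emph{Main obstacle.} The delicate point is the $L^2(0,T;W)$-regularity of $\varphi_{\ep,\lambda}$ --- equivalently $B_{\ep}(\varphi_{\ep,\lambda})\in L^2(0,T;H)$. Because $K_{\ep}$ is not $W^{1,1}$ (Remark~1.1), the cross term $(B_{\ep}(\varphi_n),-\Delta\varphi_n)_H=a_{\ep}(\varphi_n,-\Delta\varphi_n)$ arising in the $-\Delta\varphi_n$-test cannot be treated by integrating by parts against the kernel; it must be controlled using only the symmetry and positivity of the bilinear form $a_{\ep}$ and the properties of the operators $B_{\ep}$, $a_{\ep}$ established in the functional framework (and the normalization in {\bf C1}), absorbing the principal part into $\lambda\|\Delta\varphi_n\|_H^2$ and the remainder into the Gronwall scheme. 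A further point requiring care is the rigorous justification of the Galerkin computations in the presence of the singular kernel, in particular that the eigenfunctions belong to $W_{\ep}$ and that the formal identities $a_{\ep}(\varphi,\psi)=(B_{\ep}\varphi,\psi)_H$ are legitimate along the scheme.
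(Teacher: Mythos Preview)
Your approach differs from the paper's: instead of a Galerkin scheme, the paper decouples the system through Lemmas~\ref{PL4} and~\ref{PL5} (existence for the Cahn--Hilliard subproblem with given~$\theta$, and for the temperature subproblem with given~$\varphi$, both taken as black boxes from the literature), and then shows that the composition $\mathcal{S}=\mathcal{B}\circ\mathcal{A}$ is a contraction in an exponentially weighted $L^2$-in-time metric, so that the Banach fixed-point theorem yields existence and uniqueness at once. Your direct Galerkin route is a legitimate alternative for existence, and the obstacle you single out --- the cross term $(B_{\ep}(\varphi_n),-\Delta\varphi_n)_H$ --- can indeed be handled at fixed $(\ep,\lambda)$, though not through ``symmetry and positivity of $a_{\ep}$'' alone: one rather uses Lemma~\ref{PL1}(ii) to bound $\|B_{\ep}(\varphi_n)\|_{H}\le C_{\ep}\|\varphi_n\|_{C^{0,\sigma}}$, interpolates $C^{0,\sigma}$ strictly between $H^1$ and $H^2$, and then absorbs via Young's inequality with an arbitrarily small factor in front of $\|\Delta\varphi_n\|_H^2$.

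The genuine gap is in your uniqueness argument. Testing the difference of the first equations with $\theta_1-\theta_2$ \emph{without} first integrating in time leaves you with the term
$\langle(\mathrm{Ln}_{\lambda}(\theta_1)-\mathrm{Ln}_{\lambda}(\theta_2))_t,\theta_1-\theta_2\rangle_{V^*,V}$,
which is neither nonnegative nor the time derivative of a nonnegative functional: because $\mathrm{Ln}_{\lambda}$ is nonlinear, there is no primitive $\Phi(\theta_1,\theta_2)$ whose time derivative equals this pairing, and monotonicity cannot be invoked since the monotone combination sits under $\partial_t$. The paper circumvents this by integrating the temperature equation over $(0,t)$ \emph{before} testing with $\theta_1-\theta_2$ (see \eqref{tomato13}): the nonlinear term then becomes $(\mathrm{Ln}_{\lambda}(\theta_1)-\mathrm{Ln}_{\lambda}(\theta_2),\theta_1-\theta_2)_H\ge 0$ by monotonicity, while the elliptic part turns into the exact derivative $\tfrac12\tfrac{d}{dt}\bigl(\|\nabla\!\int_0^t(\theta_1-\theta_2)\|_H^2+\|\int_0^t(\theta_1-\theta_2)\|_{L^2(\Gamma)}^2\bigr)$. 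To match the resulting coupling $(\varphi_1-\varphi_2,\theta_1-\theta_2)_H$, the third equation is tested with $\varphi_1-\varphi_2$ (not its time derivative), combined with the $V_0^*$ identity $\tfrac12\tfrac{d}{dt}\|\varphi_1-\varphi_2\|_{V_0^*}^2+(\mu_1-\mu_2,\varphi_1-\varphi_2)_H=0$ coming from \eqref{dfsoleplam2}; monotonicity of $\beta_{\lambda}$ then disposes of the potential term directly. Your pair of tests $(\theta_1-\theta_2,\ (\varphi_1-\varphi_2)_t)$ does produce the right cancellation of the coupling, but the estimate cannot close because of the $\mathrm{Ln}_{\lambda}$ term.
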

\begin{thm}\label{maintheorem2}
Assume that {\rm {\bf C1}-{\bf C8}} hold. 
Then 
for all $\ep \in (0, 1)$ 
there exists a weak solution 
$(\theta_{\ep}, \mu_{\ep}, \varphi_{\ep}, \xi_{\ep})$ of {\rm \ref{Pep}}. 
Moreover, 
the solution components $\theta_{\ep}$ and $\varphi_{\ep}$ are unique, 
and the solution components $\mu_{\ep}$ and $\xi_{\ep}$ 
are unique if $\beta$ is single-valued. 
\end{thm}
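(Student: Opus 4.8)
The plan is to obtain a weak solution of \ref{Pep} by passing to the limit as $\lambda \searrow 0$ in the approximate problem \ref{Peplam}, whose solvability is guaranteed by Theorem \ref{maintheorem1}. Accordingly, the first step is to derive $\lambda$-independent a priori estimates for the triplet $(\theta_{\ep, \lambda}, \mu_{\ep, \lambda}, \varphi_{\ep, \lambda})$. The $\ep$ being fixed throughout, I would test \eqref{dfsoleplam1} with $w = \theta_{\ep, \lambda}$ and exploit the convexity inequality associated to $\mathrm{Ln}_{\lambda}$ (writing $\widehat{\mathrm{Ln}}_{\lambda}$ for a primitive) together with the structure of the Robin boundary term and {\bf C5}, obtaining an $L^2$-in-time bound on $\theta_{\ep, \lambda}$ in $V$ and an $L^{\infty}$-in-time bound on $\mathrm{Ln}_{\lambda}(\theta_{\ep, \lambda})$ in $H$; simultaneously I would test \eqref{dfsoleplam3} with $(\varphi_{\ep, \lambda})_{t}$, using {\bf C7}, {\bf C8}, the Lipschitz bound {\bf C3}, and the coercivity and monotonicity of $B_{\ep}$ and $\beta_{\lambda}$ on $V_{\ep}$, to get bounds on $(\varphi_{\ep, \lambda})_{t}$ in $L^2(0,T;H)$, on $\varphi_{\ep, \lambda}$ in $L^{\infty}(0,T;V_{\ep})$, and on $\lambda^{1/2}\nabla\varphi_{\ep, \lambda}$ in $L^{\infty}(0,T;H)$; the standard chain of a priori estimates for Cahn--Hilliard-type systems (testing the mass-conservation equation \eqref{dfsoleplam2} against $\mathcal{N}(\varphi_{\ep, \lambda} - (\varphi_{\ep, \lambda})_{\Omega})$, controlling the spatial mean of $\mu_{\ep, \lambda}$ via a compactness/monotonicity argument that uses $(\varphi_{0,\ep})_{\Omega} \in [a_0, b_0] \subset \mathrm{Int}\, D(\beta)$, and then bootstrapping elliptic regularity for $\mu_{\ep, \lambda}$ and for $\varphi_{\ep, \lambda}$ in $W_{\ep}$) yields the remaining bounds: $\mu_{\ep, \lambda}$ in $L^2(0,T;W)$, $\beta_{\lambda}(\varphi_{\ep, \lambda})$ in $L^2(0,T;H)$, and $(\mathrm{Ln}_{\lambda}(\theta_{\ep, \lambda}))_{t}$ in $L^2(0,T;V^{*})$.

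The second step is to extract, by Banach--Alaoglu and the Aubin--Lions--Simon lemma, a subsequence $\lambda = \lambda_{k} \searrow 0$ along which $\theta_{\ep, \lambda} \to \theta_{\ep}$, $\varphi_{\ep, \lambda} \to \varphi_{\ep}$, $\mu_{\ep, \lambda} \to \mu_{\ep}$, $\beta_{\lambda}(\varphi_{\ep, \lambda}) \to \xi_{\ep}$ and $\mathrm{Ln}_{\lambda}(\theta_{\ep, \lambda}) \to \zeta_{\ep}$ in the appropriate weak and (thanks to compactness in time) strong topologies. I would first identify $\lambda\Delta\varphi_{\ep, \lambda} \to 0$ and $\lambda\varphi_{0,\ep,\lambda} \to 0$ in $L^2(0,T;H)$ and $H$ respectively, which removes the regularizing Laplacian and lets \eqref{dfsoleplam3} pass to its nonlocal limit form $\mu_{\ep} = (\varphi_{\ep})_{t} + B_{\ep}(\varphi_{\ep}) + \xi_{\ep} + \pi(\varphi_{\ep}) - \theta_{\ep}$; the continuity of $\pi$ and strong $L^2$ convergence of $\varphi_{\ep, \lambda}$ handle the $\pi$-term, and continuity/boundedness of $B_{\ep} : W_{\ep} \to H$ (using the $W_{\ep}$-bound) handles the nonlocal term. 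To identify $\xi_{\ep} \in \beta(\varphi_{\ep})$ I would use the standard maximal-monotone-graph limit argument: $\limsup_{k} \int\!\!\int \beta_{\lambda_k}(\varphi_{\ep,\lambda_k})\varphi_{\ep,\lambda_k} \le \int\!\!\int \xi_{\ep}\varphi_{\ep}$, combined with the weak convergence, gives $\xi_{\ep} \in \beta(\varphi_{\ep})$ a.e. For the logarithmic nonlinearity I would similarly show $\zeta_{\ep} = \ln\theta_{\ep}$ by monotonicity (using that $\mathrm{Ln}_{\lambda} \to \ln$ in the graph sense and a uniform lower bound on $\theta_{\ep, \lambda}$, or a suitable cut-off argument) and recover $\theta_{\ep} > 0$ a.e.; passing to the limit in the initial condition $(\mathrm{Ln}_{\lambda}(\theta_{\ep,\lambda}))(0) = \mathrm{Ln}_{\lambda}(\theta_0)$ gives $(\ln\theta_{\ep})(0) = \ln\theta_0$, and $\varphi_{\ep,\lambda}(0) \to \varphi_{\ep}(0)$ with $\varphi_{0,\ep,\lambda} \to \varphi_{0,\ep}$ in $H$ (from {\bf C8}) gives $\varphi_{\ep}(0) = \varphi_{0,\ep}$.

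The uniqueness statements I would prove directly on \ref{Pep}. For two solutions, I would subtract the equations, test the difference of \eqref{dfsoleplam1}-analogues with $F^{-1}$ applied to the difference of $\ln\theta_{\ep}$ (or with the difference of $\theta_{\ep}$), test the difference of the mass-conservation equations with $\mathcal{N}$ applied to the difference of $\varphi_{\ep}$ (which are mean-zero for equal initial data by conservation), and test the difference of the chemical-potential relations with the difference of $\varphi_{\ep}$; the monotonicity of $r \mapsto \ln r$, of $\beta$, and of $B_{\ep}$ absorbs the bad terms, the $(\varphi_{\ep})_{t}$-term in $\mu_{\ep}$ contributes a further good quadratic term, and the Lipschitz term from $\pi$ is controlled by Gronwall, yielding $\theta_{\ep} = \bar\theta_{\ep}$ and $\varphi_{\ep} = \bar\varphi_{\ep}$; when $\beta$ is single-valued, $\xi_{\ep} = \beta(\varphi_{\ep})$ is then determined, and $\mu_{\ep}$ is determined up to the spatial mean by the chemical-potential relation, with the mean fixed (and hence $\mu_{\ep}$ unique) by integrating the relation over $\Omega$. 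The main obstacle I anticipate is the control of the spatial mean of $\mu_{\ep, \lambda}$ uniformly in $\lambda$: this requires showing that $(\varphi_{\ep, \lambda})_{\Omega}$ stays inside a fixed compact subinterval of $\mathrm{Int}\,D(\beta)$ and then deducing an $L^1$-bound on $\beta_{\lambda}(\varphi_{\ep, \lambda})$ from the mean-value estimate, a delicate point because the singular potential and the merely-$L^2$ datum $\varphi_{0,\ep}$ (not in $V$) leave little room; I expect to handle it as in Colli--K.\ \cite{CK2}, exploiting conservation of mass together with {\bf C7}.
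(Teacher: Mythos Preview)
Your outline is the paper's route, but two steps in the limit passage need more care than your sketch allows.

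First, you claim a uniform $L^2(0,T;W_\ep)$ bound on $\varphi_{\ep,\lambda}$ and that $\lambda\Delta\varphi_{\ep,\lambda}\to 0$ in $L^2(0,T;H)$. Neither is obtained separately: testing \eqref{dfsoleplam3} with $\beta_\lambda(\varphi_{\ep,\lambda})$ bounds $\beta_\lambda(\varphi_{\ep,\lambda})$ in $L^2(0,T;H)$ (the cross term $(-\lambda\Delta\varphi_{\ep,\lambda}+B_\ep(\varphi_{\ep,\lambda}),\beta_\lambda(\varphi_{\ep,\lambda}))_H\ge 0$ by Lemma~\ref{PL6} and monotonicity of $\beta_\lambda$), and then \eqref{dfsoleplam3} itself bounds only the \emph{sum} $-\lambda\Delta\varphi_{\ep,\lambda}+B_\ep(\varphi_{\ep,\lambda})$ in $L^2(0,T;H)$. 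In the limit one uses $\lambda\varphi_{\ep,\lambda}\to 0$ in $L^\infty(0,T;V)$ (so $-\lambda\Delta\varphi_{\ep,\lambda}\to 0$ only in $V^*$) together with $B_\ep(\varphi_{\ep,\lambda})\to B_\ep(\varphi_\ep)$ weakly$^*$ in $L^\infty(0,T;V_\ep^*)$ (Lemma~\ref{PL1}(v)) to identify the weak $L^2(0,T;H)$ limit of the sum with $B_\ep(\varphi_\ep)$; only then does $\varphi_\ep\in L^2(0,T;W_\ep)$ follow.

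Second, you take strong $L^2(0,T;H)$ convergence of $\varphi_{\ep,\lambda}$ from Aubin--Lions, which would require $V_\ep\hookrightarrow H$ compact. The paper does not use this (Lemma~\ref{PL1}(iii) asserts only a continuous dense embedding), and indeed under {\bf C1} one can have $V_\ep=H$ for fixed $\ep$ (e.g.\ in $d=3$ with bounded $\rho_\ep$, since then $K_\ep\in L^1$), so no compactness is available. The paper gets only strong $C([0,T];V^*)$ convergence from $H\hookrightarrow V^*$ compact, then runs the weighted monotonicity argument of \cite[Section~4.3]{DST2021} on the maximal monotone operator $\beta+\pi+2\|\pi'\|_{L^\infty(\mathbb{R})}\mathrm{id}$ to obtain simultaneously the inclusion \eqref{pasta3}-type and the upgrade to strong convergence in $C([0,T];H)\cap L^2(0,T;V_\ep)$; only afterwards are $\zeta_\ep=\pi(\varphi_\ep)$ and $\xi_\ep\in\beta(\varphi_\ep)$ read off. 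The remaining estimates (including the separate test of \eqref{dfsoleplam1} with $\mathrm{Ln}_\lambda(\theta_{\ep,\lambda})$ to get the $L^\infty(0,T;H)$ bound), the identification $u_\ep=\ln\theta_\ep$ via \cite[Lemma~1.3]{Barbu1}, and the uniqueness argument are essentially as you describe.
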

\begin{thm}\label{maintheorem3}
Assume that {\rm {\bf C1}-{\bf C8}} hold. 
Then existence of weak solutions to \eqref{P} 
can be proved by passing to the limit in {\rm \ref{Pep}}  
as $\ep = \ep_{j} \searrow 0$, 
where $\{\ep_{j}\}_{j}$ is a subsequence of $\{\ep\}_{\ep > 0}$. 
\end{thm}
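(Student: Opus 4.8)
The plan is to pass to the limit as $\ep = \ep_j \searrow 0$ in the weak formulation of \ref{Pep}, using the uniform estimates that ought to be available (independently of $\ep$) for the solutions $(\theta_\ep, \mu_\ep, \varphi_\ep, \xi_\ep)$ constructed in Theorem \ref{maintheorem2}. First I would record the $\ep$-uniform bounds: by testing the first equation with suitable functions and exploiting {\bf C4}, {\bf C5}, one obtains $\ln\theta_\ep$ bounded in $H^1(0,T;V^*)\cap L^\infty(0,T;H)$ and $\theta_\ep$ bounded in $L^2(0,T;V)$, together with a positive lower bound $\theta_\ep \geq \theta_* > 0$ from the maximum principle on $\theta_0$ and $\theta_\Gamma$. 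Testing the phase equation appropriately (using the energy $E_\ep$, the bound $\|\widehat\beta(\varphi_{0,\ep})\|_{L^1}\leq c_1$ from {\bf C7}, and the mean-value control $a_0 \leq (\varphi_{0,\ep})_\Omega \leq b_0$) yields $\varphi_\ep$ bounded in $H^1(0,T;H)$, $\sup_t E_\ep(\varphi_\ep(t))$ bounded, $\xi_\ep$ bounded in $L^2(0,T;H)$, $B_\ep(\varphi_\ep)$ bounded in $L^2(0,T;H)$, and $\mu_\ep$ bounded in $L^2(0,T;V)$.

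The crucial step is the nonlocal-to-local passage for the term $B_\ep(\varphi_\ep)$. Here I would invoke the compactness machinery of Davoli--Scarpa--Trussardi \cite{DST2021}, adapted to condition {\bf C1}: the family of seminorms $E_\ep$ $\Gamma$-converges (and is asymptotically compact in $L^2(\Omega)$) to a multiple of the Dirichlet energy $\varphi \mapsto \frac{1}{2}\int_\Omega |\nabla\varphi|^2$, so a uniform bound on $E_\ep(\varphi_\ep)$ together with an $L^2$ bound forces $\varphi_\ep$ to be precompact in $L^2(0,T;H)$ with every limit point lying in $L^\infty(0,T;V)$; moreover the weak limit of $B_\ep(\varphi_\ep)$ in $L^2(0,T;H)$ must equal $-\Delta\varphi$ (tested against smooth functions, using symmetry of $K_\ep$ and the structure $a_\ep(\varphi,\psi)=(B_\ep(\varphi),\psi)_H$). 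Combining this $L^2(0,T;H)$ strong convergence of $\varphi_\ep$ with Aubin--Lions (using the $H^1(0,T;H)$ bound) and the standard weak/weak-$*$ extraction, I would pass to a subsequence so that $\varphi_\ep \to \varphi$ strongly in $C([0,T];H)$ and a.e., $\theta_\ep \to \theta$ weakly in $L^2(0,T;V)$, $\ln\theta_\ep \to \eta$ weakly-$*$ with $(\ln\theta_\ep)_t \to \eta_t$, $\mu_\ep \to \mu$ weakly in $L^2(0,T;V)$, $\xi_\ep \to \xi$ weakly in $L^2(0,T;H)$, and $B_\ep(\varphi_\ep) \to -\Delta\varphi$ weakly in $L^2(0,T;H)$ (which in particular identifies $\varphi \in L^2(0,T;W)$).

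The remaining identifications are by now routine but must be done carefully. To identify $\eta = \ln\theta$ I would use strong convergence of $\theta_\ep$ in, say, $L^2(0,T;H)$ (obtained from the $H^1(0,T;V^*)$ bound on $\ln\theta_\ep$, the $L^2(0,T;V)$ bound on $\theta_\ep$, the lower bound $\theta_\ep\geq\theta_*$, and Aubin--Lions applied to $\ln\theta_\ep$ together with the Lipschitz-on-$[\theta_*,\infty)$ inverse relation), together with a.e.\ convergence; the lower bound $\theta \geq \theta_*$ persists. To identify $\xi \in \beta(\varphi)$ a.e.\ I would use the maximal monotonicity of the realization of $\beta$ on $L^2(\Omega\times(0,T))$ combined with $\xi_\ep \rightharpoonup \xi$ and $\varphi_\ep \to \varphi$ strongly, checking $\limsup \int\int \xi_\ep \varphi_\ep \leq \int\int \xi\varphi$ via the equation. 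Passing to the limit in \eqref{dfsoleplam1}-type identity, in $(\varphi_\ep)_t - \Delta\mu_\ep = 0$, and in the $\mu_\ep$-equation is then immediate by linearity and the convergences above, the Lipschitz term $\pi(\varphi_\ep)\to\pi(\varphi)$ being handled by {\bf C3} and a.e.\ convergence. The initial conditions pass thanks to $\varphi_{0,\ep}\rightharpoonup\varphi_0$ from {\bf C7} (matching the strong $C([0,T];H)$ limit, hence $\varphi(0)=\varphi_0$) and the fixed initial datum $\ln\theta_0$.

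I expect the main obstacle to be the rigorous identification $B_\ep(\varphi_\ep) \rightharpoonup -\Delta\varphi$, i.e.\ showing simultaneously that the limit lies in $L^2(0,T;W)$ and that the nonlocal operator converges to the Laplacian in the weak sense; this requires the delicate localization estimates tied to {\bf C1} (the concentration $\int_0^\infty \rho_\ep(r)r^{d-1}\,dr = c_d$ with mass escaping every fixed annulus as $\ep\searrow0$) and is exactly the point where the argument of \cite{DST2021} must be transplanted into the present coupled, singular setting. A secondary technical point is ensuring the uniform-in-$\ep$ a priori estimates are genuinely $\ep$-free — in particular that the mean-value and $\widehat\beta$ controls in {\bf C7} feed through the standard Cahn--Hilliard-type estimate without picking up $\ep$-dependent constants from $B_\ep$.
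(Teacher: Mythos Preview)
Your overall architecture matches the paper's: uniform-in-$\ep$ bounds inherited from the $\lambda$-level estimates, extraction of weakly convergent subsequences, strong convergence $\varphi_\ep\to\varphi$ in $C([0,T];H)$ via the Davoli--Scarpa--Trussardi compactness (Lemma~\ref{PL3} here), identification $\xi\in\beta(\varphi)$ by maximal monotonicity, and identification of the weak limit of $B_\ep(\varphi_\ep)$ with $-\Delta\varphi$ by transplanting \cite[Section~5.1]{DST2021}. On those points your proposal is essentially the paper's proof.

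There is, however, a genuine gap in your treatment of the temperature. You assert a pointwise lower bound $\theta_\ep\geq\theta_*$ ``from the maximum principle on $\theta_0$ and $\theta_\Gamma$'', and then try to parlay strong convergence of $\ln\theta_\ep$ (from Aubin--Lions) into strong convergence of $\theta_\ep$ via a ``Lipschitz-on-$[\theta_*,\infty)$ inverse relation''. Neither step is justified. The first equation carries the source $f-(\varphi_\ep)_t$, which is merely $L^2(\Omega\times(0,T))$ with no sign control, so a comparison/maximum principle giving $\theta_\ep\geq\theta_*$ is not available in this singular setting (and the paper never claims it). Even granting such a bound, the inverse of $\ln$ on $[\ln\theta_*,\infty)$ is $\exp$, which is not globally Lipschitz, so strong convergence of $\ln\theta_\ep$ does not yield strong convergence of $\theta_\ep$ without an additional upper bound you have not produced.

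The paper sidesteps all of this: it uses Aubin--Lions (compact embedding $H\hookrightarrow V^*$) to get $\ln\theta_\ep\to u$ strongly in $C([0,T];V^*)$, pairs this with the weak convergence $\theta_\ep\rightharpoonup\theta$ in $L^2(0,T;V)$ to pass to the limit in $\int_0^T\langle\ln\theta_\ep,\theta_\ep\rangle_{V^*,V}$, and then invokes the standard maximal-monotone identification (\cite[Lemma~1.3, p.~42]{Barbu1}) to conclude $u=\ln\theta$. No lower bound on $\theta_\ep$ and no strong convergence of $\theta_\ep$ are needed. You should replace your temperature argument with this one.
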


\bigskip

This paper is organized as follows. 
In Section \ref{Sec2} we give useful results for proving the main theorems. 
Sections \ref{Sec3}, \ref{Sec4} and \ref{Sec5} 
are devoted to the proofs of 
Theorems \ref{maintheorem1}, \ref{maintheorem2} and \ref{maintheorem3}, 
respectively.

\vspace{10pt}

\section{Preliminaries}\label{Sec2}

In this section we will provide some results 
which will be used later for the proofs 
of Theorems \ref{maintheorem1}, \ref{maintheorem2} and \ref{maintheorem3}. 
\begin{lem}[{\cite[Lemma 1]{DST2021}}]\label{PL1}
For all $\ep >0$ the following properties hold: 
\begin{enumerate}
\setlength{\itemsep}{3mm}
\item[(i)] The bilinear forms $(\cdot, \cdot)_{V_{\ep}}$ and $(\cdot, \cdot)_{W_{\ep}}$ 
are scalar products on $V_{\ep}$ and $W_{\ep}$ 
inducing the norms $\|\cdot\|_{V_{\ep}}$ and $\|\cdot\|_{W_{\ep}}$, 
respectively. 
Moreover, $V_{\ep}$ and $W_{\ep}$ are Hilbert spaces.  
\item[(ii)] For all $\sigma \in (0, 1]$ 
we have $C^{0, \sigma}(\overline{\Omega}) \hookrightarrow W_{\ep}$ 
continuously and there exists a constant $C_{\ep, \sigma} > 0$ satisfying 
\[
B_{\ep}(\varphi) \in L^{\infty}(\Omega),  
\quad 
\|B_{\ep}(\varphi)\|_{L^{\infty}(\Omega)} 
\leq C_{\ep, \sigma}\|\varphi\|_{C^{0, \sigma}(\overline{\Omega})} 
\qquad \mbox{for all}\ \varphi \in C^{0, \sigma}(\overline{\Omega}). 
\]
\item[(iii)] The inclusions $W_{\ep} \hookrightarrow V_{\ep} \hookrightarrow H$ 
are continuous and dense. 
\item[(iv)] The unbounded linear operator 
$B_{\ep} : D(B_{\ep}) = W_{\ep} \subset H \to H$ 
is maximal monotone. 
\item[(v)] The operator $B_{\ep} : D(B_{\ep}) = W_{\ep} \subset H \to H$ 
extends to a bounded linear operator $B_{\ep} : V_{\ep} \to {V_{\ep}}^{*}$ 
fulfilling 
\[
\|B_{\ep}(\varphi)\|_{{V_{\ep}}^{*}} \leq \|\varphi\|_{V_{\ep}}  
\quad \mbox{for all}\ \varphi \in V_{\ep}. 
\]
Moreover, such extension coincides with the linear operator 
$A_{\ep} : V_{\ep} \to {V_{\ep}}^{*}$ 
associated to the bilinear form $a_{\ep}$, defined as 
\[
A_{\ep}(\varphi) := a_{\ep}(\varphi, \cdot) \quad \mbox{for}\ \varphi \in {V_{\ep}}. 
\]
\item[(vi)] The map $E_{\ep} : V_{\ep} \to [0, +\infty)$ is convex, lower 
semicontinuous and is of class $C^{1}$. 
Moreover, $DE_{\ep} = B_{\ep} : V_{\ep} \to {V_{\ep}}^{*}$ 
in the sense of G\^{a}teaux. 
\end{enumerate}
\end{lem}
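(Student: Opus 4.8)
The plan is to prove the six items in an order that lets the analytically harder ones support the rest, keeping $a_\ep$ throughout as a symmetric nonnegative bilinear form. First I would record two identities on which everything rests. Since $\rho_\ep$ is even, $K_\ep(x,y)=K_\ep(y,x)$, so $a_\ep$ is symmetric and $a_\ep(\varphi,\varphi)=2E_\ep(\varphi)\ge0$; and a symmetrization computation (split the double integral defining $a_\ep$, rename the dummy variables in the term carrying $\psi(y)$, and use symmetry of $K_\ep$) gives $a_\ep(\varphi,\psi)=(B_\ep(\varphi),\psi)_H$ for every $\varphi\in W_\ep$ and $\psi\in V_\ep$, the Fubini step being justified once $B_\ep(\varphi)\in H$. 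Granting these, the scalar-product part of (i) is nearly immediate: $(\cdot,\cdot)_{V_\ep}=(\cdot,\cdot)_H+a_\ep(\cdot,\cdot)$ and $(\cdot,\cdot)_{W_\ep}=(\cdot,\cdot)_H+(B_\ep(\cdot),B_\ep(\cdot))_H$ are symmetric bilinear forms whose positivity is forced by the $H$-summand, and they induce exactly $\|\cdot\|_{V_\ep}$ and $\|\cdot\|_{W_\ep}$; completeness I postpone.

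For (ii) I would fix $\varphi\in C^{0,\sigma}(\overline\Omega)$ and estimate the integrand of $B_\ep(\varphi)(x)$ pointwise by $[\varphi]_{C^{0,\sigma}}\,\rho_\ep(|x-y|)\,|x-y|^{\sigma-2}$, then pass to polar coordinates centered at $x$; this reduces the $L^\infty$ bound to finiteness of the radial integral $\int_0^{\operatorname{diam}\Omega}\rho_\ep(r)\,r^{d+\sigma-3}\,dr$, which for each fixed $\ep$ produces the constant $C_{\ep,\sigma}$. This is the only genuinely analytic point, and it is where the near-origin size of $\rho_\ep$ for fixed $\ep$, as opposed to the local-limit scaling in {\bf C1}, enters. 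Then $B_\ep(\varphi)\in L^\infty(\Omega)\subset H$ and the identity above show $\varphi\in W_\ep$ with $\|\varphi\|_{W_\ep}\le C\|\varphi\|_{C^{0,\sigma}(\overline\Omega)}$. The embeddings in (iii) follow from $\|\varphi\|_H\le\|\varphi\|_{V_\ep}$ and, for $W_\ep\hookrightarrow V_\ep$, from $2E_\ep(\varphi)=(B_\ep(\varphi),\varphi)_H\le\tfrac12(\|B_\ep(\varphi)\|_H^2+\|\varphi\|_H^2)$, giving $\|\varphi\|_{V_\ep}^2\le\tfrac32\|\varphi\|_{W_\ep}^2$; density of each space in the next comes from $C^{0,\sigma}(\overline\Omega)\subset W_\ep$ being dense in $H$, with $V_\ep$-density obtained by a truncation/mollification approximation bounded in the $V_\ep$-norm.

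Part (iv) is the hub. Monotonicity is linearity together with $(B_\ep(\varphi)-B_\ep(\psi),\varphi-\psi)_H=2E_\ep(\varphi-\psi)\ge0$. For maximality I would verify the range condition $R(I+B_\ep)=H$ and invoke Minty: given $g\in H$, apply Lax--Milgram to the coercive continuous form $(\cdot,\cdot)_{V_\ep}$ on the Hilbert space $V_\ep$ to solve $(\varphi,\psi)_{V_\ep}=(g,\psi)_H$ for all $\psi\in V_\ep$; then $a_\ep(\varphi,\psi)=(g-\varphi,\psi)_H$ with $g-\varphi\in H$, so by definition $\varphi\in W_\ep$ and $B_\ep(\varphi)=g-\varphi$, i.e. $\varphi+B_\ep(\varphi)=g$. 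Since maximal monotone operators have closed graphs, the completeness of $W_\ep$ left open in (i) follows at once; the completeness of $V_\ep$ I would instead get by realizing the map sending $\varphi$ to the pair formed by $\varphi$ and the function $(x,y)\mapsto\varphi(x)-\varphi(y)$ as an isometry of $V_\ep$ into $H\oplus L^2(\Omega\times\Omega;K_\ep\,dx\,dy)$, and checking that its image is closed by an almost-everywhere subsequence argument. For (v), set $A_\ep(\varphi):=a_\ep(\varphi,\cdot)$; Cauchy--Schwarz in the weighted $L^2$ gives $\|A_\ep(\varphi)\|_{V_\ep^*}\le\sqrt{2E_\ep(\varphi)}\le\|\varphi\|_{V_\ep}$, and the identity shows $A_\ep$ extends $B_\ep$.

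Finally (vi): $E_\ep=\tfrac12 a_\ep(\cdot,\cdot)$ is a nonnegative quadratic form, hence convex, and it is $\|\cdot\|_{V_\ep}$-continuous, so lower semicontinuous; expanding $E_\ep(\varphi+t\psi)$ in $t$ and differentiating at $t=0$ gives the G\^ateaux derivative $a_\ep(\varphi,\cdot)=A_\ep(\varphi)$, and since $\varphi\mapsto A_\ep(\varphi)$ is bounded linear from $V_\ep$ to $V_\ep^*$, $E_\ep$ is of class $C^1$ with $DE_\ep=B_\ep$. The main obstacle is the tight coupling around (iv): the single Lax--Milgram/Minty step simultaneously delivers maximal monotonicity, the completeness of $W_\ep$, and the Riesz-type identification underlying the $B_\ep$--$A_\ep$ correspondence, so it must be arranged before most of the other items can be closed; the secondary difficulty is the pointwise $L^\infty$ estimate in (ii), whose constant rests on the fixed-$\ep$ integrability of $\rho_\ep$ near the origin.
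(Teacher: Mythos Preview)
The paper does not prove this lemma: it is quoted from \cite[Lemma~1]{DST2021} and used as a black box, so there is no argument in the paper to compare your sketch against. Your reconstruction is nonetheless the standard one and its core is correct: the symmetrization identity $a_\ep(\varphi,\psi)=(B_\ep(\varphi),\psi)_H$, the Lax--Milgram solution of $(\varphi,\psi)_{V_\ep}=(g,\psi)_H$ yielding surjectivity of $I+B_\ep$ and hence maximal monotonicity, the graph-norm completeness of $W_\ep$ via closedness of maximal monotone operators, the isometric embedding of $V_\ep$ into $H\oplus L^2(\Omega\times\Omega;K_\ep\,dxdy)$ for completeness of $V_\ep$, and the quadratic-form computations for (v) and (vi) all go through as you describe.

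The one place where your argument does not close under \textbf{C1} as written in this paper is exactly the step you flag in (ii). You correctly reduce $\|B_\ep(\varphi)\|_{L^\infty}$ to finiteness of $\int_0^{R}\rho_\ep(r)\,r^{d+\sigma-3}\,dr$ and then assert this is finite ``for each fixed $\ep$''. For $d=3$, and for $d=2$ with $\sigma=1$, the exponent $d+\sigma-3\ge 0$ and $\rho_\ep\in L^1_{\mathrm{loc}}(\mathbb{R})$ suffices; but for $d=2$ and $\sigma\in(0,1)$ the exponent is $\sigma-1<0$, and neither local integrability nor the $(d{-}1)$-st moment condition in \textbf{C1} controls $\int_0^R\rho_\ep(r)\,r^{\sigma-1}\,dr$ (consider $\rho_\ep(r)\sim r^{-1/2}$ near $0$, which is compatible with \textbf{C1}). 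Your remark that ``the near-origin size of $\rho_\ep$ for fixed $\ep$ \ldots\ enters'' is thus precisely right, but it names a missing hypothesis rather than supplying an argument; in \cite{DST2021} this is handled by additional structure on the admissible mollifier families. A secondary point: the density of $W_\ep$ in $V_\ep$ is left as a gesture (``truncation/mollification bounded in the $V_\ep$-norm''); the idea is correct but one should say why the nonlocal energy of the mollified sequence converges.
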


\begin{lem}[{\cite[Lemma 2]{DST2021}}]\label{PL2}
The inclusion $V \hookrightarrow V_{\ep}$ is continuous 
and there exists a constant $C > 0$, independent of $\ep$, such that 
\[
\|\varphi\|_{V_{\ep}} \leq C\|\varphi\|_{V} 
\quad \mbox{for all}\ \varphi \in V. 
\] 
It holds that 
\begin{align*}
&\lim_{\ep \searrow 0} E_{\ep}(\varphi) 
= \frac{1}{2}\int_{\Omega} |\nabla\varphi|^2 
\quad \mbox{for all}\ \varphi \in V, 
\\[3mm]
&\lim_{\ep \searrow 0} 
     \langle B_{\ep}(\varphi_{1}), \varphi_{2} \rangle_{{V_{\ep}}^{*}, V_{\ep}}
= \int_{\Omega} \nabla \varphi_{1} \cdot \nabla \varphi_{2} 
\quad \mbox{for all}\ \varphi_{1}, \varphi_{2} \in V. 
\end{align*}
Moreover, 
for all $\varphi \in H$ and all sequence $\{\varphi_{\ep}\}_{\ep > 0} \subset H$ 
with $\varphi_{\ep} \to \varphi$ in $H$ as $\ep \searrow 0$, 
the inequality 
\[
\liminf_{\ep \searrow 0} E_{\ep}(\varphi_{\ep}) 
\geq E(\varphi)
\]
holds, where 
the map $E : H \to [0, +\infty]$ is defined as 
\[
E(\varphi) := 
\begin{cases}
\frac{1}{2}\int_{\Omega} |\nabla\varphi|^2  & \varphi \in V, 
\\ 
+ \infty  & \varphi \in H \setminus V. 
\end{cases}
\]
\end{lem}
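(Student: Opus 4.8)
The plan is to recognize $E_\ep$ as the Bourgain--Brezis--Mironescu (BBM) nonlocal energy associated with the radial mollifiers $\rho_\ep$: by {\bf C1} the family $\rho_\ep(|\cdot|)$ concentrates at the origin while keeping the normalized mass $\int_0^{+\infty}\rho_\ep(r)r^{d-1}\,dr=c_d$. The four assertions are then, respectively, a uniform BBM upper bound, the pointwise energy limit, its polarized (bilinear) version, and the BBM--Ponce lower-semicontinuity inequality. I would prove them in this order, the last being the genuinely hard part.

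For the uniform continuity of $V\hookrightarrow V_\ep$ it suffices to bound $E_\ep(\varphi)$ by $C\|\varphi\|_V^2$ with $C$ independent of $\ep$, since $\|\varphi\|_{V_\ep}^2=\|\varphi\|_H^2+2E_\ep(\varphi)$ and $\|\varphi\|_H\le\|\varphi\|_V$. First extend $\varphi$ to a function in $H^1(\mathbb{R}^d)$ via a bounded extension operator (available since $\Gamma$ is smooth), then write $\varphi(x)-\varphi(y)=\int_0^1\nabla\varphi(y+t(x-y))\cdot(x-y)\,dt$ for smooth $\varphi$. Cauchy--Schwarz in $t$, the substitution $h=x-y$, polar coordinates $h=r\sigma$, and $\int_0^{+\infty}\rho_\ep(r)r^{d-1}\,dr=c_d$ bound the double integral over $\mathbb{R}^d\times\mathbb{R}^d$ (which dominates the one over $\Omega\times\Omega$) by a dimensional constant times $c_d\|\nabla\varphi\|_H^2$, uniformly in $\ep$; density of smooth functions in $H^1(\Omega)$ then finishes the estimate.

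For the pointwise limit I would pass to polar coordinates: with $h=r\sigma$ and $K_\ep(x,x+h)=\rho_\ep(r)/r^2$, the Jacobian $r^{d-1}$ absorbs $r^{-2}$, so the radial weight is exactly $\rho_\ep(r)r^{d-1}$, of total mass $c_d$. Splitting the $r$-integral at a fixed $\delta>0$, the outer part is controlled by $\delta^{-2}\|\varphi\|_H^2\int_\delta^{+\infty}\rho_\ep(r)r^{d-1}\,dr\to0$ by the concentration hypothesis in {\bf C1}, while on $r<\delta$ the Taylor expansion $\varphi(x+r\sigma)-\varphi(x)=r\,\nabla\varphi(x)\cdot\sigma+o(r)$ (valid for $\varphi\in C^2$) produces the leading term $\frac14c_d\int_{S^{d-1}}|e_1\cdot\sigma|^2\,d{\cal H}^{d-1}(\sigma)\int_\Omega|\nabla\varphi|^2$; since $c_d=2/\int_{S^{d-1}}|e_1\cdot\sigma|^2\,d{\cal H}^{d-1}(\sigma)$, this equals $\frac12\int_\Omega|\nabla\varphi|^2$, and letting $\delta\searrow0$ removes the error. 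The general case $\varphi\in V$ follows by density, using the first step to bound $|E_\ep(\varphi)^{1/2}-E_\ep(\varphi_n)^{1/2}|\le E_\ep(\varphi-\varphi_n)^{1/2}\le C\|\varphi-\varphi_n\|_V$ uniformly in $\ep$, so the $\ep$-limit and the $V$-limit interchange.

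The bilinear limit is then immediate by polarization: by Lemma~\ref{PL1}(v), $\langle B_\ep(\varphi_1),\varphi_2\rangle_{{V_\ep}^*,V_\ep}=a_\ep(\varphi_1,\varphi_2)=E_\ep(\varphi_1+\varphi_2)-E_\ep(\varphi_1)-E_\ep(\varphi_2)$, and applying the pointwise limit to each term yields $\int_\Omega\nabla\varphi_1\cdot\nabla\varphi_2$. The liminf inequality is the main obstacle. Given $\varphi_\ep\to\varphi$ in $H$, I may assume $\liminf_\ep E_\ep(\varphi_\ep)=:L<+\infty$ and pass to a subsequence with $E_\ep(\varphi_\ep)\to L$, so the nonlocal seminorms are uniformly bounded. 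The BBM--Ponce characterization of $H^1$ then forces $\varphi\in V$ and yields $\frac12\int_\Omega|\nabla\varphi|^2\le L$. The delicate point is that $\varphi_\ep$ varies with $\ep$, so one cannot simply send $r\searrow0$ inside the radial average; the remedy is to estimate the directional difference quotients $\int_\Omega|\varphi_\ep(x+r\sigma)-\varphi_\ep(x)|^2\,dx$ from below after testing against smooth compactly supported vector fields, then combine Fatou's lemma in $r$ with the weak $H^1$-convergence of the relatively compact sequence $\{\varphi_\ep\}$. This lower-semicontinuity step is exactly the content of \cite[Lemma~2]{DST2021}, which I would invoke for the rigorous argument.
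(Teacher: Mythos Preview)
The paper does not prove this lemma at all: it is stated with the attribution \cite[Lemma~2]{DST2021} and treated as a black box from the literature, so there is no in-paper argument to compare against. Your proposal goes further and sketches the actual proof behind that citation, correctly identifying all four assertions as instances of the Bourgain--Brezis--Mironescu theory (uniform upper bound and pointwise convergence of the nonlocal energies) together with Ponce's lower-semicontinuity result for the $\liminf$ inequality; the polarization argument for the bilinear limit is also correct. In effect you have reconstructed what \cite{DST2021} itself relies on, which is more than the present paper asks for.

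One minor remark: in the pointwise-limit step your treatment of the boundary is a bit casual. After the change of variables $h=x-y$ the inner integral is over $\{y\in\Omega:\ y+h\in\Omega\}$, not over all of $\Omega$, so when you Taylor-expand and integrate you are implicitly discarding a boundary-layer contribution of order $|\{y\in\Omega:\operatorname{dist}(y,\Gamma)<\delta\}|$; this is harmless because $\Gamma$ is smooth and the layer has measure $O(\delta)$, but it is worth saying so. Similarly, your density argument for the first assertion uses the extension operator only to get the \emph{upper} bound on $E_\ep$, which is fine since the integrand is nonnegative and enlarging the domain of integration from $\Omega\times\Omega$ to $\mathbb{R}^d\times\mathbb{R}^d$ can only increase it. With these small clarifications your outline is sound.
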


\begin{lem}[{\cite[Lemma 3]{DST2021}}]\label{PL3}
For all $\delta > 0$ there exist constants $C_{\delta} > 0$ 
and $\ep_{\delta} > 0$ such that 
for all sequence $\{\varphi_{\ep}\}_{\ep \in (0, \ep_{\delta})} \subset V_{\ep}$ 
the inequality 
\[
\|\varphi_{\ep_{1}} - \varphi_{\ep_{2}}\|_{H}^2 
\leq \delta\bigl(E_{\ep_{1}}(\varphi_{\ep_{1}}) + E_{\ep_{2}}(\varphi_{\ep_{2}})\bigr) 
      + C_{\delta}\|\varphi_{\ep_{1}} - \varphi_{\ep_{2}}\|_{V^{*}}^2
\]
holds for all $\ep_{1}, \ep_{2} \in (0, \ep_{\delta})$. 
\end{lem}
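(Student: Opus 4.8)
The plan is to prove the stated inequality by contradiction, recognizing it as an Ehrling-type interpolation estimate that is \emph{uniform} over the family of nonlocal spaces $\{V_\ep\}$; the compensation for the varying spaces will come from the nonlocal compactness machinery underlying Lemmas~\ref{PL1} and~\ref{PL2}. Since the inequality involves only $\varphi_{\ep_1}$ and $\varphi_{\ep_2}$, it suffices to show: for every $\delta>0$ there are $C_\delta,\ep_\delta>0$ such that $\|u-v\|_H^2 \le \delta(E_{\ep_1}(u)+E_{\ep_2}(v)) + C_\delta\|u-v\|_{V^*}^2$ for all $\ep_1,\ep_2\in(0,\ep_\delta)$, $u\in V_{\ep_1}$, $v\in V_{\ep_2}$. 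Suppose this fails for some $\delta_0>0$. Then for each $n\in\mathbb{N}$, taking $C_\delta=n$ and $\ep_\delta=1/n$, I can pick $\ep_{1,n},\ep_{2,n}\in(0,1/n)$ and $u_n\in V_{\ep_{1,n}}$, $v_n\in V_{\ep_{2,n}}$ violating the inequality. As the inequality is homogeneous of degree two under the common scaling $(u_n,v_n)\mapsto(cu_n,cv_n)$, I normalize $\|u_n-v_n\|_H=1$, which yields the two bounds $E_{\ep_{1,n}}(u_n)+E_{\ep_{2,n}}(v_n)\le 1/\delta_0$ and $\|u_n-v_n\|_{V^*}^2 < 1/n \to 0$.

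Next I would reduce to sequences that are bounded in $H$. Writing $w_n:=u_n-v_n$, testing the duality pairing against the constant function shows $|\,(w_n)_\Omega\,|\le |\Omega|^{-1/2}\|w_n\|_{V^*}\to 0$, so the means of $w_n$ vanish. Because $E_\ep$ is invariant under adding a common constant to both arguments, I replace $u_n,v_n$ by $u_n-(v_n)_\Omega$ and $v_n-(v_n)_\Omega$ without changing any term in the inequality; the second is then mean-free, and the first has mean $(w_n)_\Omega\to0$. A uniform nonlocal Poincar\'e--Wirtinger inequality (valid for all small $\ep$, in the spirit of Ponce's estimate and consistent with the energy convergence in Lemma~\ref{PL2}) bounds the mean-free parts in $H$ by the energies; combined with $\|w_n\|_H=1$ this makes both $\{u_n\}$ and $\{v_n\}$ bounded in $H$ with $\sup_n(E_{\ep_{1,n}}(u_n)+E_{\ep_{2,n}}(v_n))<\infty$.

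With these uniform bounds in hand, I would invoke the nonlocal (Bourgain--Brezis--Mironescu / Ponce type) compactness theorem underlying the framework of \cite{DST2021}: along a subsequence, $u_n\to u$ and $v_n\to v$ strongly in $H$ with $u,v\in V$. Passing to the limit, $w_n=u_n-v_n\to u-v$ strongly in $H$, so $\|u-v\|_H=1$; on the other hand $w_n\to0$ in $V^*$ forces $u-v=0$ as an element of $V^*$, hence $u=v$ in $H$ via the injection $H\hookrightarrow V^*$. This contradicts $\|u-v\|_H=1$ and proves the inequality.

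The main obstacle is the passage from energy bounds to $H$-compactness across the \emph{varying} spaces $V_{\ep_{1,n}},V_{\ep_{2,n}}$ as $\ep_{i,n}\to0$: this is exactly where the nonlocal-to-local BBM/Ponce theory enters, through both the uniform Poincar\'e--Wirtinger inequality used to secure the $H$-bounds and the uniform compactness used to extract strong $H$-limits lying in $V$. Once these two analytic inputs are granted, the scaling normalization, the mean-value control, and the final contradiction are routine.
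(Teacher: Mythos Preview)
The paper does not give its own proof of this lemma; it is quoted verbatim from \cite[Lemma~3]{DST2021} in the Preliminaries section and used later without further argument. Your contradiction approach---normalizing $\|u_n-v_n\|_H=1$, subtracting means so as to invoke a uniform nonlocal Poincar\'e--Wirtinger inequality, and then appealing to BBM/Ponce-type compactness to obtain strong $H$-convergence and reach a contradiction via $V^*$---is the standard and correct route to such uniform Ehrling-type estimates across the family $\{V_\ep\}$, and is consistent with the machinery of \cite{DST2021} on which this paper relies.
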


\begin{lem}[{\cite[Section 4.1]{DST2021}}]\label{PL4}
For all $\theta \in L^2(0, T; H)$, $\ep \in (0, 1)$ and $\lambda \in (0, 1)$ 
there exists a unique solution $(\overline{\varphi}, \overline{\mu}) 
\in H^1(0, T; H) \times L^2(0, T; W)$ 
of the problem 
\begin{align*}
&\overline{\varphi}_{t} - \Delta \overline{\mu} = 0 
                 \quad  \mbox{a.e.\ in}\ \Omega\times(0, T),  
    \\[2mm]
        &\overline{\mu} = \overline{\varphi}_{t} 
                                        - \lambda \Delta \overline{\varphi}
                                        + B_{\ep}(\overline{\varphi}) 
                                        + \beta_{\lambda}(\overline{\varphi}) 
                                        + \pi(\overline{\varphi}) - \theta  
          \quad \mbox{a.e.\ in}\ \Omega\times(0, T), 
     \\[2mm]
     &\partial_{\nu}\overline{\mu} = \partial_{\nu}\overline{\varphi} = 0 
         \quad \mbox{a.e.\ on}\ \Gamma\times(0, T),    
     \\[2mm]
        &\overline{\varphi}(0) = \varphi_{0, \ep, \lambda} 
                                                \quad \mbox{a.e.\ in}\ \Omega. 
\end{align*}
\end{lem}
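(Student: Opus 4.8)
The plan is to follow the Faedo--Galerkin scheme of \cite[Section 4.1]{DST2021} (one could alternatively view the system, after eliminating $\overline{\mu}$ and passing to the mean‑zero component, as a doubly‑nonlinear evolution in $V_{0}^{*}$): for \emph{fixed} $\ep\in(0,1)$ and $\lambda\in(0,1)$ the nonlinearities $\beta_{\lambda}$ and $\pi$ are globally Lipschitz and, by Lemma~\ref{PL1}, $B_{\ep}$ is a linear monotone operator mapping the smooth Galerkin functions into $H$, so the problem is a viscous Cahn--Hilliard system with smooth potential, to which the standard machinery applies. Concretely I would take the eigenfunctions $\{e_{j}\}_{j\ge1}$ of $-\Delta$ with homogeneous Neumann boundary conditions (an orthonormal basis of $H$, orthogonal in $V$ and in $W$), set $V_{n}:=\mathrm{span}\{e_{1},\dots,e_{n}\}$, look for $\overline{\varphi}_{n}=\sum_{j=1}^{n}a_{j}(t)e_{j}$ and $\overline{\mu}_{n}=\sum_{j=1}^{n}c_{j}(t)e_{j}$ solving the $V_{n}$‑projected equations, and eliminate $\overline{\mu}_{n}$ via $\overline{\varphi}_{n,t}-\Delta\overline{\mu}_{n}=0$; since $e_{j}\in C^{0,\sigma}(\overline{\Omega})\hookrightarrow W_{\ep}$ by Lemma~\ref{PL1}(ii), each coefficient $a_{\ep}(\overline{\varphi}_{n},e_{k})=(B_{\ep}(\overline{\varphi}_{n}),e_{k})_{H}$ is a well‑defined linear function of $a$, so one obtains an ODE system $(I+\Lambda_{n})\dot a=G(a)$ with $\Lambda_{n}$ the invertible diagonal matrix of the eigenvalues and $G$ Lipschitz; this has a unique local, hence (by the estimates below) global, solution.

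Next I would derive the a priori estimates. Testing the first Galerkin equation by $\overline{\mu}_{n}$ and the chemical‑potential equation by $\overline{\varphi}_{n,t}$, adding, and using Lemma~\ref{PL1}(vi) (so that $\int_{\Omega}B_{\ep}(\overline{\varphi}_{n})\overline{\varphi}_{n,t}=\frac{d}{dt}E_{\ep}(\overline{\varphi}_{n})$), the Lipschitz continuity of $\pi$ and Young's inequality, one gets
\[
\frac{d}{dt}\Big(\tfrac{\lambda}{2}\|\nabla\overline{\varphi}_{n}\|_{H}^{2}+E_{\ep}(\overline{\varphi}_{n})+\int_{\Omega}\widehat{\beta_{\lambda}}(\overline{\varphi}_{n})\Big)+\tfrac14\|\overline{\varphi}_{n,t}\|_{H}^{2}+\|\nabla\overline{\mu}_{n}\|_{H}^{2}\le C\big(1+\|\overline{\varphi}_{n}\|_{H}^{2}+\|\theta\|_{H}^{2}\big),
\]
where $\widehat{\beta_{\lambda}}$ is the Moreau--Yosida regularization of $\widehat{\beta}$; since $(\overline{\varphi}_{n}(t))_{\Omega}$ is time‑independent and, by {\bf C7}--{\bf C8}, stays in a fixed compact subset of $\mathrm{Int}\,D(\beta)$, the Poincar\'e--Wirtinger inequality bounds $\|\overline{\varphi}_{n}\|_{H}$ by the energy, and Gronwall's lemma yields bounds for $\overline{\varphi}_{n}$ in $L^{\infty}(0,T;V)\cap H^{1}(0,T;H)$, for $E_{\ep}(\overline{\varphi}_{n})$ and $\int_{\Omega}\widehat{\beta_{\lambda}}(\overline{\varphi}_{n})$ in $L^{\infty}(0,T)$, and for $\nabla\overline{\mu}_{n}$ in $L^{2}(0,T;H)$. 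To control the mean of $\overline{\mu}_{n}$ I would integrate the chemical‑potential equation over $\Omega$ (noting $\int_{\Omega}B_{\ep}(\overline{\varphi}_{n})=0$ by the symmetry of $K_{\ep}$, and $\int_{\Omega}\Delta\overline{\varphi}_{n}=0$) and test it by $\overline{\varphi}_{n}-(\varphi_{0,\ep})_{\Omega}$, invoking the standard fact that $\|\beta_{\lambda}(v)\|_{L^{1}(\Omega)}\le C\big(1+\int_{\Omega}\beta_{\lambda}(v)(v-m)\big)$ whenever $m$ lies in a fixed compact subset of $\mathrm{Int}\,D(\beta)$; this gives a bound for $(\overline{\mu}_{n})_{\Omega}$ in $L^{2}(0,T)$, hence for $\overline{\mu}_{n}$ in $L^{2}(0,T;V)$, and elliptic regularity for $-\Delta\overline{\mu}_{n}=-\overline{\varphi}_{n,t}$, $\partial_{\nu}\overline{\mu}_{n}=0$, upgrades this to a bound in $L^{2}(0,T;W)$.

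Then I would pass to the limit $n\to\infty$: the above bounds give weak and weak‑$*$ limits, and an Aubin--Lions argument gives $\overline{\varphi}_{n}\to\overline{\varphi}$ strongly in $C([0,T];H)$ and in $L^{2}(0,T;V)$; since $\pi$ is continuous and $\beta_{\lambda}$, $B_{\ep}$ are Lipschitz/linear, all nonlinear terms are identified at once, and $\overline{\varphi}_{n}(0)=\varphi_{0,\ep,\lambda}$ passes to the limit, so $(\overline{\varphi},\overline{\mu})$ solves the system with $\overline{\varphi}\in H^{1}(0,T;H)\cap L^{\infty}(0,T;V)$ and $\overline{\mu}\in L^{2}(0,T;W)$ (and, a posteriori, $-\lambda\Delta\overline{\varphi}+B_{\ep}(\overline{\varphi})\in L^{2}(0,T;H)$, which makes the chemical‑potential identity meaningful a.e.). For uniqueness I would take two solutions, subtract, test the difference of the first equations by ${\cal N}$ of $\overline{\varphi}^{(1)}-\overline{\varphi}^{(2)}$ (which has zero mean since the initial data coincide), and conclude by the monotonicity of $B_{\ep}$ and $\beta_{\lambda}$, the Lipschitz bound on $\pi$, the cancellation of $\theta$, and Gronwall's lemma.

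The step I expect to be the main obstacle is the a priori control of the spatial mean of $\overline{\mu}_{n}$ — equivalently the uniform $L^{1}$ bound on $\beta_{\lambda}(\overline{\varphi}_{n})$ — because this is precisely what upgrades $\overline{\mu}$ from $\nabla\overline{\mu}\in L^{2}(0,T;H)$ to $\overline{\mu}\in L^{2}(0,T;W)$, and it relies essentially on the mean of $\varphi_{0,\ep,\lambda}$ remaining in a fixed compact subset of $\mathrm{Int}\,D(\beta)$, which is exactly what {\bf C7}--{\bf C8} are designed to ensure.
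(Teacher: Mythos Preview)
Your sketch is correct and follows essentially the Faedo--Galerkin approach of \cite[Section~4.1]{DST2021}, which is exactly what the paper invokes: the lemma is stated here without proof and simply attributed to that reference. The only cosmetic point is that $\overline{\varphi}_{n}(0)$ should be the $V_{n}$-projection of $\varphi_{0,\ep,\lambda}$ rather than $\varphi_{0,\ep,\lambda}$ itself, but this is standard and does not affect the argument.
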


\begin{lem}[{cf.\ \cite[Proof of Lemma 3.4]{CL1998}}]\label{PL5}
For all $\varphi \in H^1(0, T; H)$, $\ep \in (0, 1)$ and $\lambda \in (0, 1)$ 
there exists a unique solution $\overline{\theta} \in L^2(0, T; H)$ 
of the problem 
\begin{align*}
&\left\langle (\lambda\overline{\theta} + \ln_{\lambda}(\overline{\theta}))_{t}, w 
                                                                                        \right\rangle_{V^{*}, V}
           + \int_{\Omega} \nabla \overline{\theta} \cdot \nabla w 
           + \int_{\Gamma} \overline{\theta}w 
\\ 
&\hspace{35mm} = (f, w)_{H} + \int_{\Gamma} \theta_{\Gamma}w 
   - (\varphi_{t}, w)_{H} 
            \quad  \mbox{a.e.\ in}\ (0, T) \ \  \mbox{for all}\ w \in V, 
\\[2mm]
&\mbox{\rm Ln$_{\lambda}$}(\overline{\theta})(0) 
                                           = \mbox{\rm Ln$_{\lambda}$}(\theta_{0}) 
                                                               \quad \mbox{a.e.\ in}\ \Omega. 
\end{align*}
\end{lem}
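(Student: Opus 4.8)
The plan is to recast the sub-problem as a nondegenerate enthalpy (Stefan-type) equation and solve it by a Faedo--Galerkin scheme, exploiting throughout that $\lambda>0$ makes the nonlinearity under the time derivative strongly monotone and bi-Lipschitz. Write $\gamma:=\mbox{\rm Ln$_{\lambda}$}$, so $\gamma(r)=\lambda r+\ln_{\lambda}(r)$. Since $\ln$ is a single-valued smooth graph, its resolvent $J_{\lambda}=(I+\lambda\ln)^{-1}$ is a smooth increasing bijection of $\mathbb{R}$ onto $(0,+\infty)$ and $\ln_{\lambda}=\ln\circ J_{\lambda}$, whence $\gamma\in C^{1}(\mathbb{R})$ with $\lambda\le\gamma'(r)\le\lambda+\tfrac{1}{\lambda}$ for all $r$. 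In particular $\gamma$ is a bi-Lipschitz increasing bijection, its inverse $\Phi:=\gamma^{-1}$ is Lipschitz with $0<\Phi'\le 1/\lambda$, and $\gamma$ is strongly monotone, $(\gamma(r)-\gamma(s))(r-s)\ge\lambda|r-s|^{2}$. I also note that $a(z,w):=\int_{\Omega}\nabla z\cdot\nabla w+\int_{\Gamma}zw$ is bounded and coercive on $V$ (the quantity $a(z,z)$ is equivalent to $\|z\|_{V}^{2}$ by a Poincaré-type trace inequality), so it induces a linear isomorphism $A:V\to V^{*}$, and that $g$, defined by $\langle g,w\rangle:=(f,w)_{H}+\int_{\Gamma}\theta_{\Gamma}w-(\varphi_{t},w)_{H}$, belongs to $L^{2}(0,T;V^{*})$ by {\bf C4}, {\bf C5}, the trace theorem and $\varphi\in H^{1}(0,T;H)$. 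The equation then reads $\partial_{t}\gamma(\overline{\theta})+A\overline{\theta}=g$ in $V^{*}$ with $\gamma(\overline{\theta})(0)=\gamma(\theta_{0})$, and the natural solution class forced by the weak form is $\overline{\theta}\in L^{2}(0,T;V)$ with $\gamma(\overline{\theta})\in H^{1}(0,T;V^{*})\cap L^{\infty}(0,T;H)$.

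For existence I would run a Galerkin scheme on the eigenbasis $\{w_{j}\}$ of $A$ (orthonormal in $H$), seeking $\theta_{n}=\sum_{j\le n}c_{j}(t)w_{j}$ with $\tfrac{d}{dt}(\gamma(\theta_{n}),w_{j})_{H}+a(\theta_{n},w_{j})=\langle g,w_{j}\rangle$ and $\theta_{n}(0)$ the $H$-projection of $\theta_{0}$; since the Jacobian $((\gamma'(\theta_{n})w_{j},w_{k})_{H})_{j,k}$ is symmetric and $\ge\lambda I$, the ODE system has a unique solution, global by the estimate below. Testing with $\theta_{n}$ and using the primitive $\widehat{\gamma}(s):=\int_{0}^{s}\sigma\gamma'(\sigma)\,d\sigma$, which obeys $\tfrac{\lambda}{2}s^{2}\le\widehat{\gamma}(s)\le\tfrac{\lambda+\lambda^{-1}}{2}s^{2}$, together with coercivity of $a$, yields a bound for $\theta_{n}$ in $L^{\infty}(0,T;H)\cap L^{2}(0,T;V)$ uniform in $n$. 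As $\gamma'$ is bounded, $\nabla\gamma(\theta_{n})=\gamma'(\theta_{n})\nabla\theta_{n}$ is controlled, so $\gamma(\theta_{n})$ is bounded in $L^{2}(0,T;V)$, and the equation bounds $\partial_{t}\gamma(\theta_{n})$ in $L^{2}(0,T;V^{*})$; Aubin--Lions then gives $\gamma(\theta_{n})\to v$ strongly in $L^{2}(0,T;H)$ along a subsequence. Since $\theta_{n}=\Phi(\gamma(\theta_{n}))$ and $\Phi$ is Lipschitz, $\theta_{n}\to\overline{\theta}:=\Phi(v)$ strongly in $L^{2}(0,T;H)$, hence $v=\gamma(\overline{\theta})$; this is exactly what permits passage to the limit in the nonlinear term under the time derivative, while the linear terms pass by weak convergence in $L^{2}(0,T;V)$ and density of $\{w_{j}\}$. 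Finally $\gamma(\overline{\theta})(0)=\gamma(\theta_{0})$ follows from $\gamma(\theta_{n})\rightharpoonup\gamma(\overline{\theta})$ in $H^{1}(0,T;V^{*})\hookrightarrow C([0,T];V^{*})$ together with $\gamma(\theta_{n})(0)=\gamma(\theta_{n}(0))\to\gamma(\theta_{0})$ in $V^{*}$.

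For uniqueness, let $\overline{\theta}_{1},\overline{\theta}_{2}$ solve the problem with the same data and set $v_{i}:=\gamma(\overline{\theta}_{i})$. Subtracting gives $(v_{1}-v_{2})_{t}+A(\overline{\theta}_{1}-\overline{\theta}_{2})=0$ in $V^{*}$; testing with $A^{-1}(v_{1}-v_{2})\in V$ produces $\tfrac{1}{2}\tfrac{d}{dt}\|v_{1}-v_{2}\|_{A^{-1}}^{2}+(v_{1}-v_{2},\overline{\theta}_{1}-\overline{\theta}_{2})_{H}=0$, where $\|\psi\|_{A^{-1}}^{2}:=\langle\psi,A^{-1}\psi\rangle_{V^{*},V}$ is a norm equivalent to $\|\cdot\|_{V^{*}}$ and I used $\langle A(\overline{\theta}_{1}-\overline{\theta}_{2}),A^{-1}(v_{1}-v_{2})\rangle=(v_{1}-v_{2},\overline{\theta}_{1}-\overline{\theta}_{2})_{H}$ (symmetry of $A$ and $v_{i}\in H$); this is the same device the paper uses for its $V^{*}$ and $V_{0}^{*}$ inner products. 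Strong monotonicity gives $(v_{1}-v_{2},\overline{\theta}_{1}-\overline{\theta}_{2})_{H}\ge\lambda\|\overline{\theta}_{1}-\overline{\theta}_{2}\|_{H}^{2}\ge 0$, so with $v_{1}(0)=v_{2}(0)=\gamma(\theta_{0})$ a Gronwall argument forces $v_{1}=v_{2}$ and hence $\overline{\theta}_{1}=\overline{\theta}_{2}$.

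The main obstacle is the nonlinearity $\gamma(\overline{\theta})$ sitting under the time derivative: a priori one controls only $\theta_{n}$, yet the limit must be taken in $\gamma(\theta_{n})$. The resolution in every step rests on the nondegeneracy $\gamma'\ge\lambda$ and the bi-Lipschitz structure of $\gamma$ (and $\Phi$), which simultaneously furnish the coercive ``enthalpy'' estimate via $\widehat{\gamma}$, the $L^{2}(0,T;V)$ bound transferring compactness from $\gamma(\theta_{n})$ back to $\theta_{n}$, and the strong monotonicity closing the uniqueness inequality. Alternatively one may adapt verbatim the argument in the cited Proof of Lemma 3.4 of \cite{CL1998}.
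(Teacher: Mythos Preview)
Your argument is correct, and in fact you supply considerably more than the paper does: the paper offers no proof of this lemma at all, recording it as a direct consequence of \cite[Proof of Lemma~3.4]{CL1998} (the ``cf.''\ in the header signals this). So there is no proof in the paper to compare against; your self-contained Galerkin sketch is a genuine addition. The structural observations you isolate---$\lambda\le\gamma'\le\lambda+\lambda^{-1}$ making $\gamma$ bi-Lipschitz, the coercive Robin form $a(\cdot,\cdot)$, the primitive $\widehat{\gamma}$ delivering the $L^{\infty}(0,T;H)\cap L^{2}(0,T;V)$ energy bound, and strong monotonicity for uniqueness---are exactly the right ones, and they are also what the cited reference exploits (though via time-discretisation rather than Galerkin).

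One technical point worth tightening: the sentence ``the equation bounds $\partial_{t}\gamma(\theta_{n})$ in $L^{2}(0,T;V^{*})$'' is slightly glib, because the Galerkin identity holds only against $w\in V_{n}:=\mathrm{span}\{w_{j}\}_{j\le n}$, while $\partial_{t}\gamma(\theta_{n})=\gamma'(\theta_{n})\partial_{t}\theta_{n}$ does not lie in $V_{n}$. The clean fix is to track $P_{n}\gamma(\theta_{n})$ instead (here $P_{n}$ is the $H$-orthogonal projector onto $V_{n}$, which is uniformly bounded on $V$ since the $w_{j}$ are $a$-orthogonal). The Galerkin system then reads $\partial_{t}\bigl(P_{n}\gamma(\theta_{n})\bigr)=P_{n}^{*}(g-A\theta_{n})$ in $V^{*}$, giving the uniform $H^{1}(0,T;V^{*})$ bound for $P_{n}\gamma(\theta_{n})$; Aubin--Lions yields $P_{n}\gamma(\theta_{n})\to v$ strongly in $L^{2}(0,T;H)$, and since $\|(I-P_{n})\gamma(\theta_{n})\|_{H}\le C\lambda_{n+1}^{-1/2}\|\gamma(\theta_{n})\|_{V}\to 0$ (spectral gap plus the $L^{2}(0,T;V)$ bound on $\gamma(\theta_{n})$), the full sequence $\gamma(\theta_{n})$ converges strongly in $L^{2}(0,T;H)$ as well. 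The rest of your argument then goes through verbatim.
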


\begin{lem}[{\cite[Proof of Theorem 3 with $a=b=0$]{O-1983}}]\label{PL6}
 Let $\beta \subset \mathbb{R} \times \mathbb{R}$ 
 be a multi-valued maximal monotone function.     
 Then
    \begin{align*}
    \bigl(-\Delta u, \beta_{\lambda}(u)\bigr)_{H} \geq 0 
    \quad \mbox{for all}\ 
    u\in W\ \mbox{and all}\ \lambda > 0,  
    \end{align*}
 where 
 $\beta_{\lambda}$ is the Yosida approximation of $\beta$ on $\mathbb{R}$. 
\end{lem}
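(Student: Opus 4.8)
The plan is to collapse the bilinear pairing into a single volume integral by Green's formula, using the homogeneous Neumann condition that is built into the space $W$, and then to recognize the resulting integrand as manifestly nonnegative via the chain rule together with the monotonicity of the Yosida approximation. The reference to \cite{O-1983} indicates this is a known computation, so the task is to reproduce it cleanly.

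First I would fix $u \in W$ and $\lambda > 0$ and record the two structural properties of $\beta_\lambda$ that the argument rests on: as the Yosida approximation of a maximal monotone graph, $\beta_\lambda : \mathbb{R} \to \mathbb{R}$ is Lipschitz continuous (with constant $1/\lambda$) and nondecreasing. Since $u \in W \subset H^2(\Omega) \subset V$, the Lipschitz continuity of $\beta_\lambda$ gives $\beta_\lambda(u) \in V = H^1(\Omega)$, and the chain rule for compositions with Lipschitz maps yields $\nabla(\beta_\lambda(u)) = \beta_\lambda'(u)\nabla u$ a.e.\ in $\Omega$, where the product is understood to vanish on the set $\{\nabla u = 0\}$.

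Next I would integrate by parts. Because $u \in H^2(\Omega)$ and $\beta_\lambda(u) \in H^1(\Omega)$ on the bounded smooth domain $\Omega$, Green's formula applies and gives
\[
(-\Delta u, \beta_\lambda(u))_H
= \int_\Omega \nabla u \cdot \nabla(\beta_\lambda(u))
  - \int_\Gamma (\partial_\nu u)\,\beta_\lambda(u).
\]
The defining property $\partial_\nu u = 0$ a.e.\ on $\Gamma$ for elements of $W$ annihilates the boundary term, leaving $(-\Delta u, \beta_\lambda(u))_H = \int_\Omega \nabla u \cdot \nabla(\beta_\lambda(u))$. Substituting the chain-rule identity turns this into $\int_\Omega \beta_\lambda'(u)\,|\nabla u|^2$, and since $\beta_\lambda$ is nondecreasing we have $\beta_\lambda'(u) \ge 0$ wherever the derivative exists; hence the integrand is nonnegative a.e.\ and the integral is $\ge 0$, which is precisely the asserted inequality.

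The main technical obstacle is the chain-rule step, since $\beta_\lambda$, while Lipschitz and monotone, need not be everywhere differentiable, so the meaning of $\beta_\lambda'(u)$ on the preimage of the (Lebesgue-null) set of nondifferentiability must be justified. I would handle this by invoking the standard Lipschitz-composition theorem (Stampacchia / Marcus--Mizel): for Lipschitz $g$ and $u \in H^1(\Omega)$ one has $g(u) \in H^1(\Omega)$ with $\nabla(g(u)) = g'(u)\nabla u$ a.e., the right-hand side read as $0$ on $\{\nabla u = 0\}$; this suffices because monotonicity forces $g' \ge 0$ on the complementary set. Should one wish to avoid citing that theorem, an equivalent route is to regularize $\beta_\lambda$ by smooth nondecreasing approximants (e.g.\ by mollification, which preserves monotonicity), prove the inequality for each $C^1$ approximant where the chain rule is elementary, and pass to the limit using convergence of the approximants to $\beta_\lambda(u)$ in $H$. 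Either way, once the boundary term is removed the nonnegativity is immediate.
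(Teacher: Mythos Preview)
Your argument is correct and is precisely the standard computation one expects here: Green's formula kills the boundary term thanks to the Neumann condition built into $W$, and the chain rule for Lipschitz compositions together with the monotonicity of $\beta_{\lambda}$ gives $\nabla u \cdot \nabla(\beta_{\lambda}(u)) = \beta_{\lambda}'(u)\,|\nabla u|^2 \geq 0$ a.e. Note that the paper itself does not supply a proof of this lemma at all---it is stated in the preliminaries with a citation to \cite{O-1983}---so there is nothing in the paper to compare against beyond confirming that your proof is the canonical one the reference points to.
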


\vspace{10pt}

\section{Proof of Theorem \ref{maintheorem1}}\label{Sec3}

In this section we will prove Theorem \ref{maintheorem1}.
\begin{prth1.1}
Thanks to Lemmas \ref{PL4} and \ref{PL5}, 
we can define ${\cal A}$, ${\cal B}$, ${\cal S}$ as 
\begin{align*}
&{\cal A}\theta = \overline{\varphi} \quad \mbox{for}\ \theta \in L^2(0, T; H),  
\\[2mm]
&{\cal B}\varphi = \overline{\theta} 
\quad \mbox{for}\ \varphi \in H^1(0, T; H),   
\\[2mm]
&X := \left\{ w \in L^2(0, T; H) \ \Bigg{|} \  
                 \int_{0}^{T} e^{-Ls}\|w(s)\|_{H}^2\,ds < + \infty 
         \right\}, 
\\[2mm]
&d(z, w) :=  \int_{0}^{T} e^{-Ls}\|z(s) - w(s)\|_{H}^2\,ds  \quad \mbox{for}\ z, w \in X, 
\\[2mm]
&{\cal S} := {\cal B}\circ{\cal A} : X \to X,  
\end{align*}
where $L > 0$ is some constant to be fixed later 
and $\overline{\varphi}$, $\overline{\theta}$ are as in Lemmas \ref{PL4} and \ref{PL5}, 
respectively. 
Now we let $\theta_{1}, \theta_{2} \in X$. 
Then we have from Lemma \ref{PL5} that 
\begin{align}\label{tomato1}
&\lambda({\cal S}\theta_{1} - {\cal S}\theta_{2}, w)_{H} 
+ (\ln_{\lambda}({\cal S}\theta_{1}) - \ln_{\lambda}({\cal S}\theta_{2}), w)_{H}  
+ ({\cal A}\theta_{1} - {\cal A}\theta_{2}, w)_{H}  
\notag \\ 
&+ \int_{\Omega} \nabla\left(\int_{0}^{t} ({\cal S}\theta_{1} - {\cal S}\theta_{2})\,ds \right) 
                                                                                                     \cdot \nabla w 
\notag \\
&+ \int_{\Gamma} \left(\int_{0}^{t} ({\cal S}\theta_{1} - {\cal S}\theta_{2})\,ds \right)w 
= 0 
\quad  \mbox{in}\ (0, T) \ \  \mbox{for all}\ w \in V. 
\end{align}
Taking $w = e^{-Lt}({\cal S}\theta_{1} - {\cal S}\theta_{2})$ in \eqref{tomato1}, 
integrating over $(0, T)$ 
and integrating by parts with respect to time 
lead to the identity 
\begin{align*}
&\lambda\int_{0}^{T} e^{-Ls}\|{\cal S}\theta_{1} - {\cal S}\theta_{2}\|_{H}^2\,ds 
+ \int_{0}^{T} e^{-Ls}
    (\ln_{\lambda}({\cal S}\theta_{1}) - \ln_{\lambda}({\cal S}\theta_{2}), 
                                                       {\cal S}\theta_{1} - {\cal S}\theta_{2})_{H}\,ds 
\\
&+ \int_{0}^{T} e^{-Ls}({\cal A}\theta_{1} - {\cal A}\theta_{2}, 
                                               {\cal S}\theta_{1} - {\cal S}\theta_{2})_{H}\,ds 
\\
&+ \frac{1}{2}e^{-LT}\left( 
     \left\|\nabla\int_{0}^{T} ({\cal S}\theta_{1} - {\cal S}\theta_{2})\,ds \right\|_{H}^2 
     + \int_{\Gamma} \left|\int_{0}^{T} ({\cal S}\theta_{1} - {\cal S}\theta_{2})\,ds \right|^2  
     \right) 
\\ 
& + \frac{L}{2}\int_{0}^{T} e^{-Ls}
      \left(
      \left\|\nabla\int_{0}^{s}({\cal S}\theta_{1} - {\cal S}\theta_{2})\,dr \right\|_{H}^2 
      + \int_{\Gamma}\left|\int_{0}^{s} ({\cal S}\theta_{1} - {\cal S}\theta_{2})\,dr \right|^2
      \right)\,ds 
= 0,
\end{align*}
and then we see from the monotonicity of $\ln_{\lambda}$ and the Young inequality that  
\begin{equation}\label{tomato2}
\lambda d({\cal S}\theta_{1}, {\cal S}\theta_{2}) 
\leq \frac{1}{\lambda} d({\cal A}\theta_{1}, {\cal A}\theta_{2}). 
\end{equation}
Here we put 
\begin{align*}
\overline{\mu}_{j} := ({\cal A}\theta_{j})_{t} - \lambda\Delta{\cal A}\theta_{j} 
             + B_{\ep}({\cal A}\theta_{j}) + \beta_{\lambda}({\cal A}\theta_{j}) 
             + \pi({\cal A}\theta_{j}) - \theta_{j}. 
\end{align*}
It follows from Lemma \ref{PL4} that 
\begin{align}
&({\cal A}\theta_{1} - {\cal A}\theta_{2})_{t} 
   - \Delta(\overline{\mu}_{1} - \overline{\mu}_{2}) = 0 
\quad  \mbox{a.e.\ in}\ \Omega\times(0, T),  \label{tomato3}
\\[1mm] 
&\partial_{\nu}(\overline{\mu}_{1} - \overline{\mu}_{2}) = 0 
\quad  \mbox{a.e.\ on}\ \Gamma\times(0, T),   \label{tomato4}
\\[1mm]  
&({\cal A}\theta_{1} - {\cal A}\theta_{2})(0) = 0 
\quad  \mbox{a.e.\ in}\ \Omega\times(0, T),    \label{tomato5}
\\[3mm] 
&\overline{\mu}_{1} - \overline{\mu}_{2} 
= ({\cal A}\theta_{1} - {\cal A}\theta_{2})_{t} 
   - \lambda\Delta({\cal A}\theta_{1} - {\cal A}\theta_{2}) 
    + B_{\ep}({\cal A}\theta_{1} - {\cal A}\theta_{2}) 
\notag \\ 
&\hspace{20mm}
    + \beta_{\lambda}({\cal A}\theta_{1}) - \beta_{\lambda}({\cal A}\theta_{2}) 
\notag \\ 
&\hspace{20mm}
    + \pi({\cal A}\theta_{1}) - \pi({\cal A}\theta_{2}) 
    - (\theta_{1} - \theta_{2}) 
\quad  \mbox{a.e.\ in}\ \Omega\times(0, T),  \label{tomato6}
\\[1mm] 
&\partial_{\nu}({\cal A}\theta_{1} - {\cal A}\theta_{2}) = 0 
\quad  \mbox{a.e.\ on}\ \Gamma\times(0, T).  \label{tomato7}
\end{align}
Since $({\cal A}\theta_{1} - {\cal A}\theta_{2})_{\Omega} = 0$ 
by \eqref{tomato3}--\eqref{tomato5}, 
we deduce from \eqref{tomato3} that 
\begin{align}\label{tomato8}
&\frac{1}{2}\frac{d}{dt}\|{\cal A}\theta_{1} - {\cal A}\theta_{2}\|_{V_{0}^*}^2 
+ (\overline{\mu}_{1} - \overline{\mu}_{2}, {\cal A}\theta_{1} - {\cal A}\theta_{2})_{H} 
\notag \\ 
&= \langle 
        ({\cal A}\theta_{1} - {\cal A}\theta_{2})_{t}, 
                            {\cal N}({\cal A}\theta_{1} - {\cal A}\theta_{2})
     \rangle_{V_{0}^*, V_{0}} 
    + (\overline{\mu}_{1} - \overline{\mu}_{2} 
                                - (\overline{\mu}_{1} - \overline{\mu}_{2})_{\Omega}, 
                                                         {\cal A}\theta_{1} - {\cal A}\theta_{2})_{H} 
\notag \\ 
&= \langle 
        ({\cal A}\theta_{1} - {\cal A}\theta_{2})_{t}, 
                            {\cal N}({\cal A}\theta_{1} - {\cal A}\theta_{2})
     \rangle_{V_{0}^*, V_{0}} 
\notag \\ 
  &\,\quad 
   + \langle 
         -\Delta (\overline{\mu}_{1} - \overline{\mu}_{2} 
                                - (\overline{\mu}_{1} - \overline{\mu}_{2})_{\Omega}), 
                                                   {\cal N}({\cal A}\theta_{1} - {\cal A}\theta_{2})
     \rangle_{V_{0}^*, V_{0}} 
\notag \\ 
&= 0.
\end{align}
We derive from \eqref{tomato6} and \eqref{tomato7} that 
\begin{align}\label{tomato9}
&(\overline{\mu}_{1} - \overline{\mu}_{2}, {\cal A}\theta_{1} - {\cal A}\theta_{2})_{H} 
\notag \\[2mm]
&= \frac{1}{2}\frac{d}{dt}\|{\cal A}\theta_{1} - {\cal A}\theta_{2}\|_{H}^2 
   + \lambda\|\nabla({\cal A}\theta_{1} - {\cal A}\theta_{2})\|_{H}^2 
   + (B_{\ep}({\cal A}\theta_{1} - {\cal A}\theta_{2}), 
                                                   {\cal A}\theta_{1} - {\cal A}\theta_{2})_{H} 
\notag \\ 
 &\,\quad 
       + (\beta_{\lambda}({\cal A}\theta_{1}) - \beta_{\lambda}({\cal A}\theta_{2}), 
                                                              {\cal A}\theta_{1} - {\cal A}\theta_{2})_{H} 
   + (\pi({\cal A}\theta_{1}) - \pi({\cal A}\theta_{2}), 
                                                          {\cal A}\theta_{1} - {\cal A}\theta_{2})_{H} 
\notag \\ 
   &\,\quad - (\theta_{1} - \theta_{2}, {\cal A}\theta_{1} - {\cal A}\theta_{2})_{H}. 
\end{align}
We multiply \eqref{tomato8}, \eqref{tomato9} by $e^{-Lt}$, 
integrate over $(0, T)$, 
integrate by parts with respect to time,  
use the monotonicity of $\beta_{\lambda}$, {\bf C3}, 
the Young inequality 
to infer that 
\begin{equation*}
\left(\frac{L}{2} - \|\pi'\|_{L^{\infty}(\mathbb{R})} - \frac{1}{2\lambda} \right)
d({\cal A}\theta_{1}, {\cal A}\theta_{2}) 
\leq \frac{\lambda}{2} d(\theta_{1}, \theta_{2}). 
\end{equation*}
Thus, fixing $L$ to $2\|\pi'\|_{L^{\infty}(\mathbb{R})} + \frac{3}{\lambda}$, we obtain 
\begin{equation}\label{tomato10}
\frac{1}{\lambda} d({\cal A}\theta_{1}, {\cal A}\theta_{2}) 
\leq \frac{\lambda}{2} d(\theta_{1}, \theta_{2}).  
\end{equation}
Therefore combining \eqref{tomato2} and \eqref{tomato10} implies that  
\begin{equation*}
d({\cal S}\theta_{1}, {\cal S}\theta_{2}) 
\leq \frac{1}{2} d(\theta_{1}, \theta_{2}),
\end{equation*}
whence ${\cal S} : X \to X$ is a contraction mapping in $X$ 
and then by the Banach fixed-point theorem 
there exists a unique function $\theta \in X$ such that 
$\theta = {\cal S}\theta$. 
Thus, putting $\varphi := {\cal A}\theta$, 
we can prove existence of weak solutions to \ref{Peplam}.

Next we show uniqueness of weak solutions to \ref{Peplam}. 
We let $(\theta_{j}, \mu_{j}, \varphi_{j})$, $j = 1, 2$, be two weak solutions of \ref{Peplam}. 
We have from \eqref{dfsoleplam2} that 
\begin{align}\label{tomato11}
&\frac{1}{2}\frac{d}{dt}\|\varphi_{1}(t) - \varphi_{2}(t)\|_{{V_{0}}^{*}}^2 
+ (\mu_{1}(t) - \mu_{2}(t), \varphi_{1}(t) - \varphi_{2}(t))_{H}
\notag \\ 
&= \left\langle 
         (\varphi_{1})_{t}(t) - (\varphi_{2})_{t}(t), {\cal N}(\varphi_{1}(t) - \varphi_{2}(t))  
    \right\rangle_{{V_{0}}^{*}, V_{0}} 
\notag \\ 
    &\,\quad + (\mu_{1}(t) - \mu_{2}(t) - (\mu_{1}(t) - \mu_{2}(t))_{\Omega}, 
                                                                    \varphi_{1}(t) - \varphi_{2}(t))_{H} 
\notag \\ 
&= \left\langle 
         (\varphi_{1})_{t}(t) - (\varphi_{2})_{t}(t), {\cal N}(\varphi_{1}(t) - \varphi_{2}(t))  
    \right\rangle_{{V_{0}}^{*}, V_{0}} 
\notag \\ 
    &\,\quad + \left\langle 
                       - \Delta (\mu_{1}(t) - \mu_{2}(t) - (\mu_{1}(t) - \mu_{2}(t))_{\Omega}), 
                                                                      {\cal N}(\varphi_{1}(t) - \varphi_{2}(t))  
                   \right\rangle_{{V_{0}}^{*}, V_{0}} 
\notag \\ 
&= 0. 
\end{align}
It follows from \eqref{dfsoleplam3} and \eqref{dfsoleplam4} that 
\begin{align}\label{tomato12}
&(\mu_{1}(t) - \mu_{2}(t), \varphi_{1}(t) - \varphi_{2}(t))_{H} 
\notag \\ 
&= \frac{1}{2}\frac{d}{dt}\|\varphi_{1}(t) - \varphi_{2}(t)\|_{H}^2 
     + \lambda\|\nabla(\varphi_{1}(t) - \varphi_{2}(t))\|_{H}^2 
     + 2E_{\ep}(\varphi_{1}(t) - \varphi_{2}(t)) 
\notag \\
  &\,\quad + (\beta_{\lambda}(\varphi_{1}(t)) - \beta_{\lambda}(\varphi_{2}(t)), 
                                                                     \varphi_{1}(t) - \varphi_{2}(t))_{H} 
\notag \\ 
 &\,\quad + (\pi(\varphi_{1}(t)) - \pi(\varphi_{2}(t)), \varphi_{1}(t) - \varphi_{2}(t))_{H} 
\notag \\ 
 &\,\quad - (\theta_{1}(t) - \theta_{2}(t), \varphi_{1}(t) - \varphi_{2}(t))_{H}. 
\end{align}
We deduce from \eqref{dfsoleplam1} and \eqref{dfsoleplam5} that 
\begin{align}\label{tomato13}
&(\mbox{\rm Ln$_{\lambda}$}(\theta_{1}(t)) - \mbox{\rm Ln$_{\lambda}$}(\theta_{2}(t)), 
                                                                               \theta_{1}(t) - \theta_{2}(t))_{H} 
+ (\theta_{1}(t) - \theta_{2}(t), \varphi_{1}(t) - \varphi_{2}(t))_{H} 
\notag \\ 
&+ \frac{1}{2}\frac{d}{dt}
      \left(
         \left\|\nabla\left(\int_{0}^{t}(\theta_{1}(s) - \theta_{2}(s))\,ds\right) \right\|_{H}^2 
         + \left\|\int_{0}^{t}(\theta_{1}(s) - \theta_{2}(s))\,ds \right\|_{L^2(\Gamma)}^2 
      \right) 
\notag \\ 
&= 0.
\end{align}
Therefore we can verify uniqueness of weak solutions to \ref{Peplam} 
by \eqref{tomato11}--\eqref{tomato13}, integrating over $(0, t)$, where $t \in [0, T]$, 
the monotonicity of $\beta_{\lambda}$ and $\mbox{\rm Ln$_{\lambda}$}$, 
{\bf C3}, and the Gronwall lemma. 
\qed
\end{prth1.1}

\vspace{10pt}

\section{Proof of Theorem \ref{maintheorem2}}\label{Sec4}

This section will prove Theorem \ref{maintheorem2}. 
We will establish a priori estimates for \ref{Peplam} 
to derive existence for \ref{Pep} 
by passing to the limit in \ref{Peplam}.  
\begin{lem}\label{estieplam1}
There exists a constant $C > 0$ such that 
\begin{align*}
&\|\nabla\mu_{\ep, \lambda}\|_{L^2(0, T; H)} 
+ \|\varphi_{\ep, \lambda}\|_{L^{\infty}(0, T; V_{\ep})} 
+ \|\varphi_{\ep, \lambda}\|_{H^1(0, T; H)} 
+ \lambda^{1/2}\|\nabla\varphi_{\ep, \lambda}\|_{L^{\infty}(0, T; H)} 
\\
&+ \|\widehat{\beta}_{\lambda}(\varphi_{\ep, \lambda})\|_{L^{\infty}(0, T; L^1(\Omega))}
\leq C, 
\\[3mm] 
&\lambda^{1/2}\|\theta_{\ep, \lambda}\|_{L^{\infty}(0, T; H)} 
+ \lambda^{1/2}\|\ln_{\lambda}(\theta_{\ep, \lambda})\|_{L^{\infty}(0, T; H)} 
+ \|J_{\lambda}^{\ln}(\theta_{\ep, \lambda})\|_{L^{\infty}(0, T; L^1(\Omega))} 
\\ 
&+ \|\theta_{\ep, \lambda}\|_{L^2(0, T; V)} \leq C 
\end{align*}
for all $\ep \in (0, 1)$ and all $\lambda \in (0, 1)$, 
where $\widehat{\beta}_{\lambda}(r) := \int_{0}^{r} \beta_{\lambda}(s)\,ds$ for $r \in \mathbb{R}$ 
and $J_{\lambda}^{\ln}$ is the resolvent of $\ln$ on $\mathbb{R}$. 
\end{lem}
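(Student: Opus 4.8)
The plan is to derive the a priori estimates by testing the equations in \ref{Peplam} with suitable multipliers, in the standard energy-estimate order: first an energy identity for the $(\varphi_{\ep,\lambda},\mu_{\ep,\lambda})$ subsystem, then a parallel estimate for the temperature equation, and finally a combination that closes the loop. First I would test \eqref{dfsoleplam2} by $\mathcal{N}\bigl((\varphi_{\ep,\lambda})_t\bigr)$ (legitimate since $(\varphi_{\ep,\lambda})_t$ has zero mean by the no-flux condition on $\mu_{\ep,\lambda}$) to produce $\|(\varphi_{\ep,\lambda})_t\|_{V_0^*}^2 + \bigl(\mu_{\ep,\lambda}-(\mu_{\ep,\lambda})_\Omega,(\varphi_{\ep,\lambda})_t\bigr)_H = 0$; then substitute \eqref{dfsoleplam3} for $\mu_{\ep,\lambda}$, using Lemma~\ref{PL1}(vi) so that $\bigl(B_\ep(\varphi_{\ep,\lambda}),(\varphi_{\ep,\lambda})_t\bigr)_H = \frac{d}{dt}E_\ep(\varphi_{\ep,\lambda})$, together with $\bigl(\beta_\lambda(\varphi_{\ep,\lambda}),(\varphi_{\ep,\lambda})_t\bigr)_H = \frac{d}{dt}\int_\Omega \widehat\beta_\lambda(\varphi_{\ep,\lambda})$ and $-\lambda\bigl(\Delta\varphi_{\ep,\lambda},(\varphi_{\ep,\lambda})_t\bigr)_H = \frac{\lambda}{2}\frac{d}{dt}\|\nabla\varphi_{\ep,\lambda}\|_H^2$. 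This yields, after integration in time,
\[
\|(\varphi_{\ep,\lambda})_t\|_{L^2(0,t;H)}^2 + E_\ep(\varphi_{\ep,\lambda}(t)) + \tfrac{\lambda}{2}\|\nabla\varphi_{\ep,\lambda}(t)\|_H^2 + \int_\Omega\widehat\beta_\lambda(\varphi_{\ep,\lambda}(t)) \ \le\ \text{(initial data)} + \int_0^t\bigl(\theta_{\ep,\lambda},(\varphi_{\ep,\lambda})_t\bigr)_H + (\pi\text{-term}),
\]
where the initial terms are controlled by \textbf{C7}, \textbf{C8} (note $2E_\ep(\varphi_{0,\ep,\lambda})\le \|\varphi_{0,\ep,\lambda}\|_{V_\ep}^2$ needs care — see below), and the $\pi$-term is handled by \textbf{C3} and Young.

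Next I would handle the temperature equation by testing \eqref{dfsoleplam1} with $w = \theta_{\ep,\lambda}$, giving
\[
\tfrac{d}{dt}\!\int_\Omega \widehat{\mathrm{Ln}}_\lambda(\theta_{\ep,\lambda}) + \|\nabla\theta_{\ep,\lambda}\|_H^2 + \|\theta_{\ep,\lambda}\|_{L^2(\Gamma)}^2 = (f,\theta_{\ep,\lambda})_H + \int_\Gamma \theta_\Gamma\theta_{\ep,\lambda} - \bigl((\varphi_{\ep,\lambda})_t,\theta_{\ep,\lambda}\bigr)_H,
\]
where $\widehat{\mathrm{Ln}}_\lambda$ is the primitive of $\mathrm{Ln}_\lambda$, which dominates $\frac{\lambda}{2}r^2$ plus the convex primitive of $\ln_\lambda$ (whose size is controlled through $J_\lambda^{\ln}$ and \textbf{C5}). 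Adding this to the previous identity cancels the troublesome cross term $\pm\bigl((\varphi_{\ep,\lambda})_t,\theta_{\ep,\lambda}\bigr)_H$. The remaining right-hand terms $(f,\theta_{\ep,\lambda})_H$ and $\int_\Gamma\theta_\Gamma\theta_{\ep,\lambda}$ are absorbed using \textbf{C4}, \textbf{C5}, a trace inequality, and Young with a small coefficient against $\|\nabla\theta_{\ep,\lambda}\|_H^2 + \|\theta_{\ep,\lambda}\|_{L^2(\Gamma)}^2$ (using $\|z\|_H^2\le C(\|\nabla z\|_H^2 + \|z\|_{L^2(\Gamma)}^2)$); then Gronwall closes the bound on $\|\theta_{\ep,\lambda}\|_{L^2(0,T;V)}$, $\lambda^{1/2}\|\theta_{\ep,\lambda}\|_{L^\infty(0,T;H)}$, $\lambda^{1/2}\|\ln_\lambda(\theta_{\ep,\lambda})\|_{L^\infty(0,T;H)}$ (from the primitive lower bound), and $\|J_\lambda^{\ln}(\theta_{\ep,\lambda})\|_{L^\infty(0,T;L^1(\Omega))}$, and simultaneously all the $\varphi_{\ep,\lambda}$-quantities. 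Finally, for $\|\nabla\mu_{\ep,\lambda}\|_{L^2(0,T;H)}$, I would test \eqref{dfsoleplam2} with $\mathcal{N}\bigl((\varphi_{\ep,\lambda})_t\bigr)$ again, or more directly note that \eqref{dfsoleplam2} with $\partial_\nu\mu_{\ep,\lambda}=0$ gives $\|\nabla\mu_{\ep,\lambda}\|_H = \|(\varphi_{\ep,\lambda})_t\|_{V_0^*}\le C\|(\varphi_{\ep,\lambda})_t\|_H$, already bounded.

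I expect the main obstacle to be the treatment of the mean value of $\varphi_{\ep,\lambda}$, since the energy identity above controls only $E_\ep(\varphi_{\ep,\lambda})$ and $\nabla\varphi_{\ep,\lambda}$, i.e. seminorms, not the full $H$- or $V_\ep$-norm; one needs a separate argument to bound $(\varphi_{\ep,\lambda})_\Omega$. By \eqref{dfsoleplam2} the mean is conserved, $(\varphi_{\ep,\lambda}(t))_\Omega = (\varphi_{0,\ep,\lambda})_\Omega = (\varphi_{0,\ep})_\Omega$, which lies in $[a_0,b_0]\subset\mathrm{Int}\,D(\beta)$ by \textbf{C7}; this is exactly the hypothesis that lets one invoke the standard Cahn–Hilliard-type lemma (as in \cite{CK2,DST2021}) to bound $\|\beta_\lambda(\varphi_{\ep,\lambda})\|_{L^1(\Omega)}$, and hence $\|(\mu_{\ep,\lambda})_\Omega\|$, from $E_\ep(\varphi_{\ep,\lambda})$, $\int_\Omega\widehat\beta_\lambda(\varphi_{\ep,\lambda})$ and the fixed mean — closing the estimate on $\|\varphi_{\ep,\lambda}\|_{L^\infty(0,T;V_\ep)}$ via the Poincaré-type inequality for $E_\ep$ (Lemma~\ref{PL2} guarantees $E_\ep$ is comparable to $\frac12\|\nabla\cdot\|_H^2$ on $V$, but for general $V_\ep$ elements one argues through $\|\varphi_{\ep,\lambda}-(\varphi_{\ep,\lambda})_\Omega\|_H^2\le C E_\ep(\varphi_{\ep,\lambda})$, which follows from \textbf{C1}). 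A secondary delicate point is verifying that the initial energy $E_\ep(\varphi_{0,\ep,\lambda})$ is bounded uniformly in $\ep,\lambda$: this is not literally in \textbf{C8}, so one passes through $\varphi_{0,\ep,\lambda}\to\varphi_{0,\ep}$ in $H$ with the $V_\ep$-bound in \textbf{C7} and lower semicontinuity, or simply assumes it is part of the construction in \textbf{C8}; I would state this explicitly. Everything else is routine absorption and Gronwall.
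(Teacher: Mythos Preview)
Your approach is essentially the paper's: testing \eqref{dfsoleplam2} with $\mathcal{N}\bigl((\varphi_{\ep,\lambda})_t\bigr)$ is exactly the paper's test with $\mu_{\ep,\lambda}$, since by \eqref{dfsoleplam2} and \eqref{dfsoleplam4} one has $\mathcal{N}\bigl((\varphi_{\ep,\lambda})_t\bigr)=\mu_{\ep,\lambda}-(\mu_{\ep,\lambda})_\Omega$ and the constant drops against the mean-free $(\varphi_{\ep,\lambda})_t$; the paper likewise tests \eqref{dfsoleplam1} with $\theta_{\ep,\lambda}$ and adds to cancel the cross term $\pm\bigl((\varphi_{\ep,\lambda})_t,\theta_{\ep,\lambda}\bigr)_H$.

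Your final paragraph, however, over-complicates matters. The estimates on $\|\beta_\lambda(\varphi_{\ep,\lambda})\|_{L^1(\Omega)}$ and on $(\mu_{\ep,\lambda})_\Omega$ are the content of the paper's \emph{next} lemma (Lemma~\ref{estieplam3}), not this one: here only $\|\nabla\mu_{\ep,\lambda}\|_{L^2(0,T;H)}$ is claimed, and you already have it from the first identity. Nor is a nonlocal Poincar\'e inequality needed for the $H$-part of $\|\varphi_{\ep,\lambda}\|_{V_\ep}$: once $\|(\varphi_{\ep,\lambda})_t\|_{L^2(0,T;H)}$ is bounded, $\|\varphi_{\ep,\lambda}(t)\|_H$ follows from the initial datum and the fundamental theorem of calculus, and the $\widehat\pi$-term is absorbed via {\bf C3} and Gronwall (the paper writes it as a perfect time derivative of $\int_\Omega\widehat\pi(\varphi_{\ep,\lambda})$ and handles it this way). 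Your flag about uniform control of $E_\ep(\varphi_{0,\ep,\lambda})$ and $\|\varphi_{0,\ep,\lambda}\|_H$ is legitimate; the paper simply cites {\bf C7}, {\bf C8} without further comment.
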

\begin{proof}
We see from \eqref{dfsoleplam2} that 
\begin{align}\label{a1}
&\int_{0}^{t} \|\nabla\mu_{\ep, \lambda}(s)\|_{H}^2\,ds 
+ \int_{0}^{t} ((\varphi_{\ep, \lambda})_{t}(s), \mu_{\ep, \lambda}(s))_{H}\,ds 
\notag \\ 
&= \int_{0}^{t} ((\varphi_{\ep, \lambda})_{t}(s) - \Delta \mu_{\ep, \lambda}(s), 
                                                                    \mu_{\ep, \lambda}(s))_{H}\,ds 
= 0. 
\end{align}
Also, thanks to \eqref{dfsoleplam3}, we can obtain  
\begin{align}\label{a2}
&\int_{0}^{t} ((\varphi_{\ep, \lambda})_{t}(s), \mu_{\ep, \lambda}(s))_{H}\,ds 
\notag \\ 
&= \int_{0}^{t} \|(\varphi_{\ep, \lambda})_{t}(s)\|_{H}^2\,ds 
     + \frac{\lambda}{2}\|\nabla\varphi_{\ep, \lambda}(t)\|_{H}^2 
     + E_{\ep}(\varphi_{\ep, \lambda}(t)) 
     + \int_{\Omega} (\widehat{\beta}_{\lambda} + \widehat{\pi})(\varphi_{\ep, \lambda}(t)) 
\notag \\ 
&= \frac{\lambda}{2}\|\nabla\varphi_{0, \ep, \lambda}\|_{H}^2 
     + E_{\ep}(\varphi_{0, \ep, \lambda}) 
     + \int_{\Omega} (\widehat{\beta}_{\lambda} + \widehat{\pi})(\varphi_{0, \ep, \lambda}) 
\notag \\ 
&\,\quad - \int_{0}^{t} (\theta_{\ep, \lambda}(s), (\varphi_{\ep, \lambda})_{t}(s))_{H}\,ds,   
\end{align}
where $\widehat{\pi}(r) := \int_{0}^{r} \pi(s)\,ds$ for $r \in \mathbb{R}$. 
On the other hand, noting that 
\[
\theta_{\ep, \lambda} 
= \lambda\ln_{\lambda}(\theta_{\ep, \lambda}) + J_{\lambda}^{\ln}(\theta_{\ep, \lambda}),\ 
\ln_{\lambda}(\theta_{\ep, \lambda}) = \ln(J_{\lambda}^{\ln}(\theta_{\ep, \lambda})), 
\]
taking $w = \theta_{\ep, \lambda}(t)$ in \eqref{dfsoleplam1}, 
and integrating over $(0, t)$, 
we infer that 
\begin{align}\label{a3}
&\frac{\lambda}{2}\|\theta_{\ep, \lambda}(t)\|_{H}^2 
+ \frac{\lambda}{2}\|\ln_{\lambda}(\theta_{\ep, \lambda}(t))\|_{H}^2 
+ \int_{\Omega} J_{\lambda}^{\ln}(\theta_{\ep, \lambda}(t)) 
\notag \\  
&+ \int_{0}^{t} (\theta_{\ep, \lambda}(s), (\varphi_{\ep, \lambda})_{t}(s))_{H}\,ds 
\notag \\ 
&+ \int_{0}^{t} \|\nabla\theta_{\ep, \lambda}(s)\|_{H}^2\,ds 
+ \int_{0}^{t} \|\theta_{\ep, \lambda}(s)\|_{L^2(\Gamma)}^2\,ds 
\notag \\[2mm] 
&= \frac{\lambda}{2}\|\theta_{0}\|_{H}^2 
+ \frac{\lambda}{2}\|\ln_{\lambda}(\theta_{0})\|_{H}^2 
+ \int_{\Omega} J_{\lambda}^{\ln}(\theta_{0}) 
\notag \\  
&\,\quad 
 + \int_{0}^{t} \left(\int_{\Gamma}\theta_{\ep, \lambda}(s)\theta_{\Gamma}(s)\right)\,ds 
    + \int_{0}^{t} (f(s), \theta_{\ep, \lambda}(s))_{H}\,ds. 
\end{align}
Therefore, since 
\[
|\ln_{\lambda}(\theta_{0})| \leq |\ln \theta_{0}| \leq \max_{\theta_{*} \leq r \leq \theta^{*}} |\ln r|
\]
by {\bf C5} and there exist constants $C^{*}, C_{*} > 0$ such that 
\begin{equation*}
C_{*}\bigl(\|\nabla w\|_{H}^2 + \|w\|_{L^2(\Gamma)}^2 \bigr) 
\leq \|w\|_{V}^2 
\leq C^{*}\bigl(\|\nabla w\|_{H}^2 + \|w\|_{L^2(\Gamma)}^2 \bigr) 
\end{equation*}
for all $w \in V$ (see, e.g., \cite[p.\ 20]{N1967}), 
we can prove Lemma \ref{estieplam1} 
by \eqref{a1}--\eqref{a3}, the Young inequality, {\bf C3}, {\bf C7}, {\bf C8}, 
the identity $J_{\lambda}^{\ln}(\theta_{0}) = - \lambda\ln_{\lambda}(\theta_{0}) + \theta_{0}$, 
and the inequalities $0 \leq \widehat{\beta_{\lambda}}(r) \leq \widehat{\beta}(r)$ 
(see e.g., \cite[Theorem 2.9, p.\ 48]{Barbu2}). 
\end{proof}

\begin{lem}\label{estieplam2}
There exists a constant $C > 0$ such that 
\[
\|\bigl(\mbox{\rm Ln$_{\lambda}$}(\theta_{\ep, \lambda})\bigr)_{t}\|_{L^2(0, T; V^{*})}
\leq C 
\]
for all $\ep \in (0, 1)$ and all $\lambda \in (0, 1)$. 
\end{lem}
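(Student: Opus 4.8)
The plan is to read off $\bigl(\mbox{\rm Ln$_{\lambda}$}(\theta_{\ep, \lambda})\bigr)_{t}$ from the weak formulation \eqref{dfsoleplam1}, treating it as the ``leftover'' term, and to bound it by a straightforward comparison estimate using the a priori bounds already established in Lemma \ref{estieplam1}. Note first that, by the very definition of weak solution, $\mbox{\rm Ln$_{\lambda}$}(\theta_{\ep, \lambda}) \in H^1(0, T; V^{*})$, so $\bigl(\mbox{\rm Ln$_{\lambda}$}(\theta_{\ep, \lambda})\bigr)_{t}$ is a well-defined element of $L^2(0, T; V^{*})$; the content of the lemma is that its norm is bounded \emph{uniformly} in $\ep$ and $\lambda$. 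Fix $\ep, \lambda \in (0, 1)$. For a.a.\ $t \in (0, T)$ and every $w \in V$, equation \eqref{dfsoleplam1} rewrites as
\[
\langle \bigl(\mbox{\rm Ln$_{\lambda}$}(\theta_{\ep, \lambda})\bigr)_{t}, w \rangle_{V^{*}, V}
= (f, w)_{H} - ((\varphi_{\ep, \lambda})_{t}, w)_{H}
  - \int_{\Omega} \nabla \theta_{\ep, \lambda} \cdot \nabla w
  - \int_{\Gamma} (\theta_{\ep, \lambda} - \theta_{\Gamma})w ,
\]
all functions being evaluated at time $t$.

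Next I would estimate each term on the right-hand side. The first is controlled by $\|f(t)\|_{H}\|w\|_{V}$, the second by $\|(\varphi_{\ep, \lambda})_{t}(t)\|_{H}\|w\|_{V}$, and the third by $\|\theta_{\ep, \lambda}(t)\|_{V}\|w\|_{V}$, all by the Cauchy--Schwarz inequality; for the boundary term one uses the continuity of the trace operator $V \to L^2(\Gamma)$ (the same norm equivalence recalled in the proof of Lemma \ref{estieplam1}, with constant depending only on $\Omega$) to bound it by $C\bigl(\|\theta_{\ep, \lambda}(t)\|_{V} + \|\theta_{\Gamma}(t)\|_{L^2(\Gamma)}\bigr)\|w\|_{V}$. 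Taking the supremum over $w \in V$ with $\|w\|_{V} \leq 1$ then yields, for a.a.\ $t \in (0, T)$,
\[
\|\bigl(\mbox{\rm Ln$_{\lambda}$}(\theta_{\ep, \lambda})\bigr)_{t}(t)\|_{V^{*}}
\leq \|f(t)\|_{H} + \|(\varphi_{\ep, \lambda})_{t}(t)\|_{H}
     + C\|\theta_{\ep, \lambda}(t)\|_{V} + C\|\theta_{\Gamma}(t)\|_{L^2(\Gamma)} .
\]
Squaring, integrating over $(0, T)$, and invoking {\bf C4}, {\bf C5} together with the uniform bounds $\|(\varphi_{\ep, \lambda})_{t}\|_{L^2(0, T; H)} \leq C$ and $\|\theta_{\ep, \lambda}\|_{L^2(0, T; V)} \leq C$ from Lemma \ref{estieplam1} then gives the asserted estimate.

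The only point that deserves (mild) attention, and indeed the only possible source of trouble, is that every constant appearing in the argument be independent of $\ep$ and $\lambda$: this holds because $f$ and $\theta_{\Gamma}$ are fixed data, the trace constant depends only on $\Omega$, and the two norms of $(\varphi_{\ep, \lambda})_{t}$ and $\theta_{\ep, \lambda}$ entering the estimate have already been bounded uniformly in $\ep, \lambda$ in Lemma \ref{estieplam1}. There is no genuine obstacle here; this is a routine comparison estimate once the energy bounds of Lemma \ref{estieplam1} are available.
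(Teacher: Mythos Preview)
Your proof is correct and is precisely the approach the paper takes: the paper's entire proof reads ``This lemma holds by \eqref{dfsoleplam1} and Lemma \ref{estieplam1},'' and you have simply spelled out the comparison estimate that this sentence encodes.
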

\begin{proof}
This lemma holds by \eqref{dfsoleplam1} and Lemma \ref{estieplam1}. 
\end{proof}

\begin{lem}\label{estieplam3}
There exists a constant $C > 0$ such that 
\[
\|\mu_{\ep, \lambda}\|_{L^2(0, T; W)} \leq C 
\]
for all $\ep \in (0, 1)$ and all $\lambda \in (0, 1)$. 
\end{lem}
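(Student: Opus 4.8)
The plan is to recover full elliptic regularity for $\mu_{\ep,\lambda}$ from the already-established $L^2(0,T;H)$ bound on $\nabla\mu_{\ep,\lambda}$ (Lemma \ref{estieplam1}) together with control of its mean value. First I would note that, by \eqref{dfsoleplam2}, $\mu_{\ep,\lambda}$ solves the elliptic problem $-\Delta\mu_{\ep,\lambda} = -(\varphi_{\ep,\lambda})_t$ a.e.\ in $\Omega$ with $\partial_\nu\mu_{\ep,\lambda}=0$ on $\Gamma$; since $\Omega$ has smooth boundary, standard elliptic regularity gives a constant $C_\Omega>0$ with
\begin{equation*}
\|\mu_{\ep,\lambda}(s)\|_{W} \leq C_\Omega\bigl(\|(\varphi_{\ep,\lambda})_t(s)\|_{H} + \|\mu_{\ep,\lambda}(s)\|_{H}\bigr)
\quad\mbox{for a.a.}\ s\in(0,T).
\end{equation*}
The first term on the right is already under control in $L^2(0,T;H)$ by Lemma \ref{estieplam1}, so everything reduces to bounding $\|\mu_{\ep,\lambda}\|_{L^2(0,T;H)}$.

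For that, I would use the Poincaré–Wirtinger inequality $\|\mu_{\ep,\lambda}(s)\|_H \leq C_P\|\nabla\mu_{\ep,\lambda}(s)\|_H + |\Omega|^{1/2}|(\mu_{\ep,\lambda}(s))_\Omega|$, which again reduces matters to estimating the spatial mean $(\mu_{\ep,\lambda})_\Omega$ in $L^2(0,T)$. To get the mean, integrate \eqref{dfsoleplam3} over $\Omega$: using that $B_{\ep}(\varphi_{\ep,\lambda})$ has zero mean (it is the gradient of $E_{\ep}$, and $a_{\ep}(\varphi,1)=0$ by the definition of $a_{\ep}$), and that $\int_\Omega\Delta\varphi_{\ep,\lambda}=0$ by the no-flux condition \eqref{dfsoleplam4}, one finds
\begin{equation*}
(\mu_{\ep,\lambda}(s))_\Omega = \frac{1}{|\Omega|}\int_\Omega\Bigl((\varphi_{\ep,\lambda})_t(s) + \beta_\lambda(\varphi_{\ep,\lambda}(s)) + \pi(\varphi_{\ep,\lambda}(s)) - \theta_{\ep,\lambda}(s)\Bigr).
\end{equation*}
The contributions of $(\varphi_{\ep,\lambda})_t$, of $\pi(\varphi_{\ep,\lambda})$ (via {\bf C3} and the $L^\infty(0,T;H)$ bound on $\varphi_{\ep,\lambda}$ from Lemma \ref{estieplam1}), and of $\theta_{\ep,\lambda}$ (via the $L^2(0,T;V)$ bound from Lemma \ref{estieplam1}) are all bounded in $L^2(0,T)$. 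The only delicate term is $\int_\Omega\beta_\lambda(\varphi_{\ep,\lambda})$.

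The main obstacle is therefore the $L^1$-in-space bound on $\beta_\lambda(\varphi_{\ep,\lambda})$, uniformly in $\ep,\lambda$. This is the classical difficulty with singular potentials and is handled by the standard trick (going back to Kenmochi–Niezgódka–Pawłow / Gilardi–Miranville–Schimperna): test \eqref{dfsoleplam3} with $\varphi_{\ep,\lambda} - (\varphi_{\ep,\lambda})_\Omega$. Since $(\varphi_{\ep,\lambda})_\Omega = (\varphi_{0,\ep})_\Omega$ is trapped in the fixed compact interval $[a_0,b_0]\subset\mathrm{Int}\,D(\beta)$ by {\bf C7} and the conservation of mass, a monotonicity argument yields, for a suitable $\delta>0$ and $C_\delta>0$ depending only on $\beta$, $a_0$, $b_0$, the pointwise-in-time estimate $\int_\Omega|\beta_\lambda(\varphi_{\ep,\lambda}(s))|\,dx \leq C_\delta\int_\Omega\beta_\lambda(\varphi_{\ep,\lambda}(s))(\varphi_{\ep,\lambda}(s)-(\varphi_{\ep,\lambda})_\Omega)\,dx + C_\delta$; the right-hand side is then controlled by pairing \eqref{dfsoleplam3} against $\varphi_{\ep,\lambda}-(\varphi_{\ep,\lambda})_\Omega$ and invoking the $\nabla\mu_{\ep,\lambda}$, $\varphi_{\ep,\lambda}$, $(\varphi_{\ep,\lambda})_t$, and $\theta_{\ep,\lambda}$ bounds of Lemma \ref{estieplam1} (plus {\bf C3}, the Young inequality, and — for the $\lambda\Delta\varphi_{\ep,\lambda}$ term, which has a sign after integration by parts — the no-flux condition). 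Integrating in time gives $\|\beta_\lambda(\varphi_{\ep,\lambda})\|_{L^2(0,T;L^1(\Omega))}\leq C$, hence the bound on $(\mu_{\ep,\lambda})_\Omega$ in $L^2(0,T)$, hence on $\mu_{\ep,\lambda}$ in $L^2(0,T;H)$, and finally — back through elliptic regularity — the claimed bound in $L^2(0,T;W)$. The whole estimate is uniform in $\ep,\lambda$ because every constant invoked comes from Lemma \ref{estieplam1}, from {\bf C3}, {\bf C7}, {\bf C8}, or from $\Omega$ alone.
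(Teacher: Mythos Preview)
Your proposal is correct and follows essentially the same route as the paper: control the mean of $\mu_{\ep,\lambda}$ via the Gilardi--Miranville--Schimperna trick (testing \eqref{dfsoleplam3} against $\varphi_{\ep,\lambda}-(\varphi_{0,\ep})_\Omega$ and using {\bf C7} to get the $L^1$-bound on $\beta_\lambda(\varphi_{\ep,\lambda})$), then apply Poincar\'e--Wirtinger and elliptic regularity. The only cosmetic differences are that the paper packages the pairing $(\mu_{\ep,\lambda},\varphi_{\ep,\lambda}-(\varphi_{0,\ep})_\Omega)_H$ through the $V_0^*$ duality (its identity \eqref{c1}) rather than directly via $\|\nabla\mu_{\ep,\lambda}\|_H$, and it states the $\beta_\lambda$ bound as $L^\infty(0,T;L^1(\Omega))$ rather than $L^2(0,T;L^1(\Omega))$---either suffices for the $L^2(0,T)$ control of $(\mu_{\ep,\lambda})_\Omega$.
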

\begin{proof}
Since 
$(\varphi_{\ep, \lambda}(t))_{\Omega} = (\varphi_{0, \ep, \lambda})_{\Omega} 
= (\varphi_{0, \ep})_{\Omega}$ 
by \eqref{dfsoleplam2}, \eqref{dfsoleplam4} and {\bf C8}, 
it holds that 
\begin{align}\label{c1}
&\frac{1}{2}\frac{d}{dt}
    \|\varphi_{\ep, \lambda}(t) - (\varphi_{0, \ep})_{\Omega}\|_{{V_{0}}^{*}}^2 
+ (\mu_{\ep, \lambda}(t), \varphi_{\ep, \lambda}(t) - (\varphi_{0, \ep})_{\Omega})_{H}
\notag \\ 
&= \left\langle (\varphi_{\ep, \lambda})_{t}(t), 
                       {\cal N}(\varphi_{\ep, \lambda}(t) - (\varphi_{0, \ep})_{\Omega}) 
    \right\rangle_{{V_{0}}^{*}, V_{0}}
+ (\mu_{\ep, \lambda}(t) - (\mu_{\ep, \lambda}(t))_{\Omega}, 
                                  \varphi_{\ep, \lambda}(t) - (\varphi_{0, \ep})_{\Omega})_{H} 
\notag \\ 
&= \left\langle (\varphi_{\ep, \lambda})_{t}(t), 
                       {\cal N}(\varphi_{\ep, \lambda}(t) - (\varphi_{0, \ep})_{\Omega}) 
    \right\rangle_{{V_{0}}^{*}, V_{0}} 
\notag \\ 
  &\,\quad 
        + \left\langle - \Delta (\mu_{\ep, \lambda}(t) - (\mu_{\ep, \lambda}(t))_{\Omega}), 
                                     {\cal N}(\varphi_{\ep, \lambda}(t) - (\varphi_{0, \ep})_{\Omega}) 
    \right\rangle_{{V_{0}}^{*}, V_{0}} 
\notag \\ 
&= 0. 
\end{align}
Also, since $\int_{\Omega} B_{\ep}(\varphi_{\ep, \lambda}(t)) = 0$, 
it follows from \eqref{dfsoleplam3} that 
\begin{align}\label{c2}
&(\mu_{\ep, \lambda}(t), \varphi_{\ep, \lambda}(t) - (\varphi_{0, \ep})_{\Omega})_{H}
\notag \\ 
&= \frac{1}{2}\frac{d}{dt}\|\varphi_{\ep, \lambda}(t) - (\varphi_{0, \ep})_{\Omega}\|_{H}^2 
     + \lambda\|\nabla\varphi_{\ep, \lambda}(t)\|_{H}^2 
     + (B_{\ep}(\varphi_{\ep, \lambda}(t)), \varphi_{\ep, \lambda}(t))_{H} 
\notag \\ 
&\,\quad + (\beta_{\lambda}(\varphi_{\ep, \lambda}(t)), 
                       \varphi_{\ep, \lambda}(t) - (\varphi_{0, \ep})_{\Omega})_{H} 
     + (\pi(\varphi_{\ep, \lambda}(t)) - \theta_{\ep, \lambda}(t), 
                       \varphi_{\ep, \lambda}(t) - (\varphi_{0, \ep})_{\Omega})_{H}.  
\end{align}
Here, owing to {\bf C7}, there exist constants $C_{1}, C_{2} > 0$ such that 
\begin{equation}\label{c3}
\|\beta_{\lambda}(\varphi_{\ep, \lambda}(t))\|_{L^1(\Omega)} 
\leq C_{1}(\beta_{\lambda}(\varphi_{\ep, \lambda}(t)), 
                       \varphi_{\ep, \lambda}(t) - (\varphi_{0, \ep})_{\Omega})_{H} + C_{2}
\end{equation}
for a.a.\ $t \in (0, T)$ and all $\ep \in (0, 1)$, $\lambda \in (0, 1)$ 
(see e.g., \cite[Section 5, p.\ 908]{GMS-2009}). 
Thus by \eqref{c1}--\eqref{c3}, 
integrating over $(0, T)$, {\bf C3}, {\bf C7}, Lemma \ref{estieplam1}, 
there exists a constant $C_{3} > 0$ such that 
\begin{equation*}
\|\beta_{\lambda}(\varphi_{\ep, \lambda})\|_{L^{\infty}(0, T; L^1(\Omega))} \leq C_{3} 
\end{equation*}
for all $\ep \in (0, 1)$ and all $\lambda \in (0, 1)$,  
and then 
\begin{equation}\label{c4}
\|(\mu_{\ep, \lambda})_{\Omega}\|_{L^2(0, T)} 
= \|((\varphi_{\ep, \lambda})_{t})_{\Omega} 
     + (\beta_{\lambda}(\varphi_{\ep, \lambda}))_{\Omega} 
     + (\pi(\varphi_{\ep, \lambda}))_{\Omega} 
     - (\theta_{\ep, \lambda})_{\Omega}
  \|_{L^2(0, T)} 
\leq C_{4} 
\end{equation}
for all $\ep \in (0, 1)$ and all $\lambda \in (0, 1)$ with some $C_{4} > 0$. 
Thus we see from Lemma \ref{estieplam1}, \eqref{c4} 
and the Poincar\'e--Wirtinger inequality that 
there exists a constant $C_{5} > 0$ such that 
\begin{equation}\label{c5}
\|\mu_{\ep, \lambda}\|_{L^2(0, T; H)} \leq C_{5} 
\end{equation}
for all $\ep \in (0, 1)$ and all $\lambda \in (0, 1)$. 
On the other hand, 
we derive from \eqref{dfsoleplam2} and Lemma \ref{estieplam1} that 
there exists a constant $C_{6} > 0$ such that 
\begin{equation}\label{c6}
\|- \Delta \mu_{\ep, \lambda}\|_{L^2(0, T; H)} 
= \|(\varphi_{\ep, \lambda})_{t}\|_{L^2(0, T; H)} 
\leq C_{6} 
\end{equation}
for all $\ep \in (0, 1)$ and all $\lambda \in (0, 1)$. 
Therefore we can obtain Lemma \ref{estieplam3} 
by \eqref{c5}, \eqref{c6} and the elliptic regularity theory. 
\end{proof}

\begin{lem}\label{estieplam4}
There exists a constant $C > 0$ such that 
\[
\|-\lambda\Delta\varphi_{\ep, \lambda} + B_{\ep}(\varphi_{\ep, \lambda})\|_{L^2(0, T; H)} 
+ \|\beta_{\lambda}(\varphi_{\ep, \lambda})\|_{L^2(0, T; H)} 
\leq C 
\]
for all $\ep \in (0, 1)$ and all $\lambda \in (0, 1)$. 
\end{lem}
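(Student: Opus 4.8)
The plan is to test \eqref{dfsoleplam3} with the right quantity to extract an $L^2(0,T;H)$ bound on $-\lambda\Delta\varphi_{\ep,\lambda} + B_{\ep}(\varphi_{\ep,\lambda})$ and on $\beta_{\lambda}(\varphi_{\ep,\lambda})$ simultaneously. From \eqref{dfsoleplam3} we write
\[
-\lambda\Delta\varphi_{\ep,\lambda} + B_{\ep}(\varphi_{\ep,\lambda}) + \beta_{\lambda}(\varphi_{\ep,\lambda})
= \mu_{\ep,\lambda} - (\varphi_{\ep,\lambda})_{t} - \pi(\varphi_{\ep,\lambda}) + \theta_{\ep,\lambda}
=: g_{\ep,\lambda},
\]
and by Lemmas \ref{estieplam1} and \ref{estieplam3} together with {\bf C3} the right-hand side $g_{\ep,\lambda}$ is bounded in $L^2(0,T;H)$ uniformly in $\ep,\lambda$. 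So it suffices to show that the two pieces on the left do not cancel destructively, i.e. that their $L^2(0,T;H)$-norms are each controlled by $\|g_{\ep,\lambda}\|_{L^2(0,T;H)}$.

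First I would multiply the identity by $\beta_{\lambda}(\varphi_{\ep,\lambda})$ and integrate over $\Omega\times(0,t)$. The term $(-\lambda\Delta\varphi_{\ep,\lambda},\beta_{\lambda}(\varphi_{\ep,\lambda}))_{H}\ge 0$ by Lemma \ref{PL6} (applied with $u=\varphi_{\ep,\lambda}(t)\in W$, recalling $\varphi_{\ep,\lambda}\in L^2(0,T;W)$ from the weak-solution class). For the nonlocal term, by Lemma \ref{PL1}(iv) the operator $B_{\ep}$ is maximal monotone on $H$ with $B_{\ep}(0)=0$; since $\beta_{\lambda}$ is monotone Lipschitz with $\beta_{\lambda}(0)=0$, the term $(B_{\ep}(\varphi_{\ep,\lambda}),\beta_{\lambda}(\varphi_{\ep,\lambda}))_{H}$ is nonnegative — this follows from the monotonicity of $B_{\ep}$ combined with the fact that $\beta_{\lambda}$ can be written as a monotone increasing function, so that $\beta_{\lambda}(\varphi_{\ep,\lambda})$ is a monotone rearrangement composition of $\varphi_{\ep,\lambda}$ against which the monotone operator $B_{\ep}$ tests nonnegatively. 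Consequently
\[
\int_{0}^{t}\|\beta_{\lambda}(\varphi_{\ep,\lambda}(s))\|_{H}^2\,ds
\le \int_{0}^{t}(g_{\ep,\lambda}(s),\beta_{\lambda}(\varphi_{\ep,\lambda}(s)))_{H}\,ds
\le \frac{1}{2}\int_{0}^{t}\|g_{\ep,\lambda}(s)\|_{H}^2\,ds + \frac{1}{2}\int_{0}^{t}\|\beta_{\lambda}(\varphi_{\ep,\lambda}(s))\|_{H}^2\,ds,
\]
which after absorption gives the bound on $\|\beta_{\lambda}(\varphi_{\ep,\lambda})\|_{L^2(0,T;H)}$. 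Then, since $-\lambda\Delta\varphi_{\ep,\lambda}+B_{\ep}(\varphi_{\ep,\lambda}) = g_{\ep,\lambda} - \beta_{\lambda}(\varphi_{\ep,\lambda})$, a triangle inequality in $L^2(0,T;H)$ yields the remaining bound.

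The main obstacle is justifying the sign of the cross term $(B_{\ep}(\varphi),\beta_{\lambda}(\varphi))_{H}\ge 0$ rigorously, since $B_{\ep}$ is a genuinely nonlocal operator rather than a local differential one, so a naive integration-by-parts argument is unavailable. The clean way is to use the bilinear-form representation from Lemma \ref{PL1}(v): for $\varphi\in W_{\ep}$ one has
\[
(B_{\ep}(\varphi),\beta_{\lambda}(\varphi))_{H} = a_{\ep}(\varphi,\beta_{\lambda}(\varphi))
= \frac{1}{2}\int_{\Omega}\int_{\Omega} K_{\ep}(x,y)\bigl(\varphi(x)-\varphi(y)\bigr)\bigl(\beta_{\lambda}(\varphi(x))-\beta_{\lambda}(\varphi(y))\bigr)\,dxdy,
\]
and since $K_{\ep}\ge 0$ and $\beta_{\lambda}$ is nondecreasing, each factor $(\varphi(x)-\varphi(y))(\beta_{\lambda}(\varphi(x))-\beta_{\lambda}(\varphi(y)))\ge 0$ pointwise, so the whole integral is nonnegative. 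This pointwise monotonicity argument is robust and needs no regularity of the kernel beyond $K_{\ep}\ge 0$, which is exactly what {\bf C1} provides. Once this is in hand, the rest is the Young-inequality absorption and triangle inequality already sketched, together with the a priori bounds of Lemmas \ref{estieplam1} and \ref{estieplam3}.
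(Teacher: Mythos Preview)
Your proposal is correct and follows essentially the same approach as the paper: test \eqref{dfsoleplam3} against $\beta_{\lambda}(\varphi_{\ep,\lambda})$, discard the cross terms via Lemma~\ref{PL6} and the pointwise monotonicity identity $(B_{\ep}(\varphi),\beta_{\lambda}(\varphi))_{H}=a_{\ep}(\varphi,\beta_{\lambda}(\varphi))\ge 0$, absorb with Young's inequality to bound $\beta_{\lambda}(\varphi_{\ep,\lambda})$, and then recover the remaining piece by the triangle inequality. The paper's proof is organized identically (see \eqref{d1}--\eqref{d4}), only writing out the double integral for $(B_{\ep}(\varphi),\beta_{\lambda}(\varphi))_{H}$ directly rather than invoking Lemma~\ref{PL1}(v).
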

\begin{proof}
We have from \eqref{dfsoleplam3} that 
\begin{align}\label{d1}
&\|\beta_{\lambda}(\varphi_{\ep, \lambda}(t))\|_{H}^2 
+ (\beta_{\lambda}(\varphi_{\ep, \lambda}(t)), 
          - \lambda\Delta\varphi_{\ep, \lambda}(t) + B_{\ep}(\varphi_{\ep, \lambda}(t)))_{H}
\notag \\ 
&= (\beta_{\lambda}(\varphi_{\ep, \lambda}(t)), 
       \mu_{\ep, \lambda}(t) - (\varphi_{\ep, \lambda})_{t}(t) 
                                               - \pi(\varphi_{\ep, \lambda}(t)) + \theta_{\ep, \lambda}(t))_{H}. 
\end{align}
Moreover, Lemma \ref{PL6} and the monotonicity of $\beta_{\lambda}$ yield that 
\begin{align}\label{d2}
&(\beta_{\lambda}(\varphi_{\ep, \lambda}(t)), 
          - \lambda\Delta\varphi_{\ep, \lambda}(t) + B_{\ep}(\varphi_{\ep, \lambda}(t)))_{H} 
\notag \\ 
&= \lambda(\beta_{\lambda}(\varphi_{\ep, \lambda}(t)), 
                                        - \Delta\varphi_{\ep, \lambda}(t))_{H} 
\notag \\ 
&\,\quad 
 + \frac{1}{2}\int_{\Omega}\int_{\Omega}
                    K_{\ep}(x, y)(\beta_{\lambda}(\varphi_{\ep, \lambda}(x, t)) 
                                              - \beta_{\lambda}(\varphi_{\ep, \lambda}(y, t)))
                                 (\varphi_{\ep, \lambda}(x, t) - \varphi_{\ep, \lambda}(y, t))\,dxdy 
\notag \\ 
&\geq 0. 
\end{align}
Thus we infer from \eqref{d1}, \eqref{d2}, the integration over $(0, T)$,  
the Young inequality, Lemmas \ref{estieplam1}, \ref{estieplam3} 
and {\bf C3} 
that there exists a constant $C_{1} > 0$ such that 
\begin{equation}\label{d3}
\|\beta_{\lambda}(\varphi_{\ep, \lambda})\|_{L^2(0, T; H)} \leq C_{1}
\end{equation}
for all $\ep \in (0, 1)$ and all $\lambda \in (0, 1)$. 
On the other hand, 
it follows from \eqref{dfsoleplam3}, Lemmas \ref{estieplam1} and \ref{estieplam3}, 
and {\bf C3} that 
there exists a constant $C_{2} > 0$ such that 
\begin{equation}\label{d4}
\|- \lambda\Delta\varphi_{\ep, \lambda} + B_{\ep}(\varphi_{\ep, \lambda}) 
                                   + \beta_{\lambda}(\varphi_{\ep, \lambda})\|_{L^2(0, T; H)}
\leq C_{2}
\end{equation}
for all $\ep \in (0, 1)$ and all $\lambda \in (0, 1)$. 
Therefore combining \eqref{d3} and \eqref{d4} leads to Lemma \ref{estieplam4}. 
\end{proof}

\begin{lem}\label{estieplam5}
There exists a constant $C > 0$ such that 
\[
\|\mbox{\rm Ln$_{\lambda}$}(\theta_{\ep, \lambda})\|_{L^{\infty}(0, T; H)}
\leq C 
\]
for all $\ep \in (0, 1)$ and all $\lambda \in (0, 1)$. 
\end{lem}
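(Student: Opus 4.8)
The plan is to obtain the bound directly from the first identity \eqref{dfsoleplam1} of the weak formulation by a standard energy estimate, with the natural test function $w = \mbox{\rm Ln$_{\lambda}$}(\theta_{\ep, \lambda})(t)$. This is admissible: since $\theta_{\ep, \lambda} \in L^2(0, T; V)$ and $\mbox{\rm Ln$_{\lambda}$}$ is, for fixed $\lambda$, a nondecreasing Lipschitz function with finite value at $0$, we have $\mbox{\rm Ln$_{\lambda}$}(\theta_{\ep, \lambda}) \in L^2(0, T; V)$; combined with $\bigl(\mbox{\rm Ln$_{\lambda}$}(\theta_{\ep, \lambda})\bigr)_{t} \in L^2(0, T; V^{*})$ (from the definition of weak solution, see also Lemma \ref{estieplam2}) this yields $\mbox{\rm Ln$_{\lambda}$}(\theta_{\ep, \lambda}) \in C([0, T]; H)$ and justifies the chain rule used below.

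First I would take $w = \mbox{\rm Ln$_{\lambda}$}(\theta_{\ep, \lambda})(t)$ in \eqref{dfsoleplam1}, which gives, for a.a.\ $t \in (0, T)$,
\begin{align*}
&\frac{1}{2}\frac{d}{dt}\|\mbox{\rm Ln$_{\lambda}$}(\theta_{\ep, \lambda})(t)\|_{H}^2
+ \int_{\Omega} \nabla\theta_{\ep, \lambda}(t)\cdot\nabla\mbox{\rm Ln$_{\lambda}$}(\theta_{\ep, \lambda})(t)
\\
&+ \int_{\Gamma} \bigl(\theta_{\ep, \lambda}(t) - \theta_{\Gamma}(t)\bigr)\mbox{\rm Ln$_{\lambda}$}(\theta_{\ep, \lambda})(t)
= \bigl(f(t) - (\varphi_{\ep, \lambda})_{t}(t),\ \mbox{\rm Ln$_{\lambda}$}(\theta_{\ep, \lambda})(t)\bigr)_{H}.
\end{align*}
The $\Omega$-integral is nonnegative because $\mbox{\rm Ln$_{\lambda}$}$ is nondecreasing. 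For the boundary term I would write
\[
\int_{\Gamma} (\theta_{\ep, \lambda} - \theta_{\Gamma})\mbox{\rm Ln$_{\lambda}$}(\theta_{\ep, \lambda})
= \int_{\Gamma} (\theta_{\ep, \lambda} - \theta_{\Gamma})\bigl(\mbox{\rm Ln$_{\lambda}$}(\theta_{\ep, \lambda}) - \mbox{\rm Ln$_{\lambda}$}(\theta_{\Gamma})\bigr)
+ \int_{\Gamma} (\theta_{\ep, \lambda} - \theta_{\Gamma})\mbox{\rm Ln$_{\lambda}$}(\theta_{\Gamma}),
\]
where the first integral on the right is nonnegative by monotonicity of $\mbox{\rm Ln$_{\lambda}$}$; since $|\mbox{\rm Ln$_{\lambda}$}(\theta_{\Gamma})| \leq \lambda\theta^{*} + |\ln\theta_{\Gamma}| \leq \theta^{*} + \max_{\theta_{*} \leq r \leq \theta^{*}}|\ln r|$ by {\bf C5} (together with $\lambda < 1$ and $|\ln_{\lambda}(\theta_{\Gamma})| \leq |\ln\theta_{\Gamma}|$), the trace inequality bounds the second integral in absolute value by $C(\|\theta_{\ep, \lambda}(t)\|_{V} + 1)$.

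Then I would discard the two nonnegative contributions, estimate the right-hand side by the Young inequality, integrate over $(0, t)$, and invoke $\|\mbox{\rm Ln$_{\lambda}$}(\theta_{\ep, \lambda})(0)\|_{H} = \|\mbox{\rm Ln$_{\lambda}$}(\theta_{0})\|_{H} \leq C$ (again by {\bf C5} and $|\ln_{\lambda}(\theta_{0})| \leq |\ln\theta_{0}|$, as in the proof of Lemma \ref{estieplam1}), $\|f\|_{L^2(\Omega\times(0, T))} \leq C$ by {\bf C4}, and $\|(\varphi_{\ep, \lambda})_{t}\|_{L^2(0, T; H)} + \|\theta_{\ep, \lambda}\|_{L^2(0, T; V)} \leq C$ by Lemma \ref{estieplam1}, to arrive at
\[
\|\mbox{\rm Ln$_{\lambda}$}(\theta_{\ep, \lambda})(t)\|_{H}^2
\leq C + \int_{0}^{t} \|\mbox{\rm Ln$_{\lambda}$}(\theta_{\ep, \lambda})(s)\|_{H}^2\,ds
\qquad \mbox{for all}\ t \in [0, T],
\]
with $C$ independent of $\ep$ and $\lambda$; the Gronwall lemma then yields the claim.

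The computation is essentially routine once the test function is fixed, so I do not anticipate a real obstacle. The only delicate point is the boundary term: $\theta_{\ep, \lambda}$ is neither sign-definite nor bounded in $H$ uniformly in $\lambda$, so one cannot simply drop $\int_{\Gamma}\theta_{\ep, \lambda}\mbox{\rm Ln$_{\lambda}$}(\theta_{\ep, \lambda})$; the splitting above --- using monotonicity of $\mbox{\rm Ln$_{\lambda}$}$ against the bounded datum $\theta_{\Gamma}$ together with the $L^2(0, T; V)$ bound on $\theta_{\ep, \lambda}$ from Lemma \ref{estieplam1} --- is what closes the argument.
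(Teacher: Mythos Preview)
Your proposal is correct and follows essentially the same route as the paper: test \eqref{dfsoleplam1} with $\mbox{\rm Ln$_{\lambda}$}(\theta_{\ep,\lambda})$, split the boundary term against $\mbox{\rm Ln$_{\lambda}$}(\theta_{\Gamma})$ and use monotonicity, bound the residual boundary piece via {\bf C5} and the $L^2(0,T;V)$ estimate from Lemma~\ref{estieplam1}, then close with Young and Gronwall. The only cosmetic difference is that the paper expands the gradient term explicitly (its identity \eqref{e2}) rather than simply invoking monotonicity of $\mbox{\rm Ln$_{\lambda}$}$ as you do; both yield the needed nonnegativity.
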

\begin{proof}
Taking $w = \mbox{\rm Ln$_{\lambda}$}(\theta_{\ep, \lambda}(t))$ in \eqref{dfsoleplam1} 
implies that 
\begin{align}\label{e1}
&\frac{1}{2}\frac{d}{dt}\|\mbox{\rm Ln$_{\lambda}$}(\theta_{\ep, \lambda}(t))\|_{H}^2 
+ \int_{\Omega} \nabla\theta_{\ep, \lambda} \cdot 
                                    \nabla\mbox{\rm Ln$_{\lambda}$}(\theta_{\ep, \lambda}(t)) 
\notag \\ 
&+ \int_{\Gamma} 
             (\theta_{\ep, \lambda}(t) - \theta_{\Gamma}(t))
                 (\mbox{\rm Ln$_{\lambda}$}(\theta_{\ep, \lambda}(t)) 
                                                 - \mbox{\rm Ln$_{\lambda}$}(\theta_{\Gamma}(t))) 
\notag \\ 
&= (f(t) - (\varphi_{\ep, \lambda})_{t}(t), 
                                       \mbox{\rm Ln$_{\lambda}$}(\theta_{\ep, \lambda}(t)))_{H}
     - \int_{\Gamma} (\theta_{\ep, \lambda}(t) - \theta_{\Gamma}(t))
                                                   \mbox{\rm Ln$_{\lambda}$}(\theta_{\Gamma}(t)).  
\end{align}
Here the identity  
\begin{align}\label{e2}
&\int_{\Omega} \nabla\theta_{\ep, \lambda} \cdot 
                                    \nabla\mbox{\rm Ln$_{\lambda}$}(\theta_{\ep, \lambda}(t)) 
\notag \\
&= \lambda\|\nabla\theta_{\ep, \lambda}(t)\|_{H}^2 
   + \lambda\|\nabla\ln_{\lambda}(\theta_{\ep, \lambda}(t))\|_{H}^2 
   + \int_{\Omega} \frac{|\nabla J_{\lambda}^{\ln}(\theta_{\ep, \lambda}(t))|^2}
                                                            {J_{\lambda}^{\ln}(\theta_{\ep, \lambda}(t))}  
\end{align}
holds. 
Thus, since 
\begin{align*}
&|\mbox{\rm Ln$_{\lambda}$}(\theta_{\Gamma})| 
\leq \lambda|\theta_{\Gamma}| + |\ln_{\lambda}(\theta_{\Gamma})| 
\leq \lambda|\theta_{\Gamma}| + |\ln(\theta_{\Gamma})| 
\leq \theta^{*} + \max_{\theta_{*} \leq r \leq \theta^{*}}|\ln r|, 
\\ 
&|\mbox{\rm Ln$_{\lambda}$}(\theta_{0})| 
\leq \lambda|\theta_{0}| + |\ln_{\lambda}(\theta_{0})| 
\leq \lambda|\theta_{0}| + |\ln(\theta_{0})| 
\leq \theta^{*} + \max_{\theta_{*} \leq r \leq \theta^{*}}|\ln r|
\end{align*}
for all $\lambda \in (0, 1)$ by {\bf C5}, 
We can prove Lemma \ref{estieplam5} 
by \eqref{e1}, \eqref{e2}, the monotonicity of \mbox{\rm Ln$_{\lambda}$}, 
integrating over $(0, t)$, the Young inequality, Lemma \ref{estieplam1}, 
the Gronwall lemma. 
\end{proof}

\bigskip

\begin{prth1.2}
Let $\ep \in (0, 1)$. 
Then by Lemmas \ref{estieplam1}-\ref{estieplam5}, 
the Aubin--Lions lemma for the compact embedding $H \hookrightarrow V^*$ 
that there exist some functions 
$u_{\ep}$, $\theta_{\ep}$, $\mu_{\ep}$, $\varphi_{\ep}$, $\xi_{\ep}$, $\eta_{\ep}$ 
such that 
\begin{align*}
    &\theta_{\ep} \in L^2(0, T; V),\ 
      u_{\ep} \in H^1(0, T; V^{*}) \cap L^{\infty}(0, T; H), \\ 
    &\mu_{\ep} \in L^2(0, T; W), \\ 
    &\varphi_{\ep} \in H^1(0, T; H) \cap L^{\infty}(0, T; V_{\ep}), \\
    &\eta_{\ep}, \xi_{\ep} \in L^2(0, T; H)
    \end{align*}
and 
\begin{align}
&\mbox{\rm Ln$_{\lambda}$}(\theta_{\ep, \lambda}) \to u_{\ep} 
\quad \mbox{weakly$^*$ in}\ H^1(0, T; V^*) \cap L^{\infty}(0, T; H),  
\label{weaklam1} 
\\[1.5mm]
&\mbox{\rm Ln$_{\lambda}$}(\theta_{\ep, \lambda}) \to u_{\ep} 
\quad \mbox{strongly in}\ C([0, T]; V^*), 
\label{stronglam1}
\\[1.5mm]
&\theta_{\ep, \lambda} \to \theta_{\ep} 
\quad \mbox{weakly in}\ L^2(0, T; V), 
\label{weaklam2}
\\[1.5mm]
&\varphi_{\ep, \lambda} \to \varphi_{\ep} 
\quad \mbox{weakly$^*$ in}\ H^1(0, T; H) \cap L^{\infty}(0, T; V_{\ep}), 
\label{weaklam3} 
\\[1.5mm]
&\varphi_{\ep, \lambda} \to \varphi_{\ep} 
\quad \mbox{strongly in}\ C([0, T]; V^*), 
\label{stronglam2} 
\\[1.5mm] 
&\lambda\varphi_{\ep, \lambda} \to 0 
\quad \mbox{strongly in}\ L^{\infty}(0, T; V), 
\label{stronglam3} 
\\[1.5mm]
&\mu_{\ep, \lambda} \to \mu_{\ep} 
\quad \mbox{weakly in}\ L^2(0, T; W), 
\label{weaklam4} 
\\[1.5mm]
&\beta_{\lambda}(\varphi_{\ep, \lambda}) \to \xi_{\ep} 
\quad \mbox{weakly in}\ L^2(0, T; H), 
\label{weaklam5}
\\[1.5mm]
&\pi(\varphi_{\ep, \lambda}) \to \zeta_{\ep} 
\quad \mbox{weakly in}\ L^2(0, T; H), 
\label{weaklam6}
\\[1.5mm]
&- \lambda \Delta \varphi_{\ep, \lambda} + B_{\ep}(\varphi_{\ep, \lambda}) \to \eta_{\ep} 
\quad \mbox{weakly in}\ L^2(0, T; H)
\label{weaklam7}
\end{align}
as $\lambda = \lambda_{j} \searrow 0$. 
We derive from \eqref{weaklam3} and Lemma \ref{PL1} (v) that 
\begin{equation*}
B_{\ep}(\varphi_{\ep, \lambda}) \to B_{\ep}(\varphi_{\ep}) 
\quad \mbox{weakly$^*$ in}\ L^{\infty}(0, T; {V_{\ep}}^*)
\end{equation*}
as $\lambda = \lambda_{j} \searrow 0$ 
and then we see from \eqref{stronglam3} and \eqref{weaklam7} that 
\begin{equation*}
B_{\ep}(\varphi_{\ep, \lambda}) \to \eta_{\ep} 
\quad \mbox{weakly$^*$ in}\ L^2(0, T; V^*)
\end{equation*}
as $\lambda = \lambda_{j} \searrow 0$. 
Thus we have 
\begin{equation}\label{pasta1}
B_{\ep}(\varphi_{\ep}) = \eta_{\ep} \in L^2(0, T; H), 
\quad 
\varphi_{\ep} \in L^2(0, T; W_{\ep}). 
\end{equation}
It follows from \eqref{stronglam1} and \eqref{weaklam2} that 
\begin{align*}
&\int_{0}^{T} (\ln_{\lambda}(\theta_{\ep, \lambda}(s)), \theta_{\ep, \lambda}(s))_{H}\,ds 
\notag \\
&= \int_{0}^{T} \left\langle 
                    \mbox{\rm Ln$_{\lambda}$}(\theta_{\ep, \lambda}), \theta_{\ep, \lambda}(s) 
                    \right\rangle_{V^{*}, V}\,ds 
    - \lambda\int_{0}^{T} \|\theta_{\ep, \lambda}(s)\|_{H}^2\,ds
\notag \\ 
&\to \int_{0}^{T} \langle u_{\ep}(s), \theta_{\ep}(s) \rangle_{V^{*}, V}\,ds 
= \int_{0}^{T} (u_{\ep}(s), \theta_{\ep}(s))_{H}\,ds 
\end{align*}
as $\lambda = \lambda_{j} \searrow 0$, 
whence it holds that 
\begin{equation}\label{pasta2}
u_{\ep} = \ln \theta_{\ep}  \quad \mbox{a.e.\ in}\ \Omega\times(0, T)
\end{equation}
(see, e.g., \cite[Lemma 1.3, p.\ 42]{Barbu1}). 
Noting that 
\begin{align*}
&\int_{0}^{T}\int_{\Omega} 
     e^{-4\|\pi'\|_{L^{\infty}(\mathbb{R})}s}
             (\mu_{\ep, \lambda}(x, s) + \theta_{\ep, \lambda}(x, s))
                                                             \varphi_{\ep, \lambda}(x, s)\,dxds  
\notag \\ 
&= \int_{0}^{T} \left\langle 
                       \varphi_{\ep, \lambda}(s), 
                               e^{-4\|\pi'\|_{L^{\infty}(\mathbb{R})}s}
                                         (\mu_{\ep, \lambda}(s) + \theta_{\ep, \lambda}(s)) 
                   \right\rangle_{V^{*}, V}\,ds 
\notag \\ 
&\to \int_{0}^{T} \left\langle 
                         \varphi_{\ep}(s), 
                           e^{-4\|\pi'\|_{L^{\infty}(\mathbb{R})}s}(\mu_{\ep}(s) + \theta_{\ep}(s)) 
                   \right\rangle_{V^{*}, V}\,ds 
\notag \\ 
&= \int_{0}^{T}\int_{\Omega} 
                   e^{-4\|\pi'\|_{L^{\infty}(\mathbb{R})}s}
                              (\mu_{\ep}(x, s) + \theta_{\ep}(x, s))\varphi_{\ep}(x, s)\,dxds 
\end{align*}
as $\lambda = \lambda_{j} \searrow 0$ 
by \eqref{weaklam2}, \eqref{stronglam2} and \eqref{weaklam4}, 
we can show 
\begin{equation}\label{pasta3}
\xi_{\ep} + \zeta_{\ep} + 2\|\pi'\|_{L^{\infty}(\mathbb{R})}\varphi_{\ep} 
\in (\beta + \pi + 2\|\pi'\|_{L^{\infty}(\mathbb{R})}\mbox{id})(\varphi_{\ep}) 
\quad \mbox{a.e.\ in}\ \Omega\times(0, T)
\end{equation}
in reference to \cite[Section 4.3]{DST2021}. 
Also, since 
\begin{align*}
&\int_{0}^{T}\int_{\Omega} 
     e^{-4\|\pi'\|_{L^{\infty}(\mathbb{R})}s}
             (\theta_{\ep, \lambda}(x, s) - \theta_{\ep}(x, s))
                             (\varphi_{\ep, \lambda}(x, s) - \varphi_{\ep}(x, s))\,dxds  
\notag \\ 
&= \int_{0}^{T} \left\langle 
                       \varphi_{\ep, \lambda}(s) - \varphi_{\ep}(s), 
                                             e^{-4\|\pi'\|_{L^{\infty}(\mathbb{R})}s}
                                                        (\theta_{\ep, \lambda}(s) - \theta_{\ep}(s)) 
                   \right\rangle_{V^{*}, V}\,ds 
\notag \\ 
&\to 0
\end{align*}
as $\lambda = \lambda_{j} \searrow 0$, 
we can verify
\begin{equation}\label{pasta4}
\varphi_{\ep, \lambda} \to \varphi_{\ep} 
\quad \mbox{strongly in}\ C([0, T]; H) \cap L^2(0, T; V_{\ep})  
\end{equation}
as $\lambda = \lambda_{j} \searrow 0$ 
in reference to \cite[Section 4.3]{DST2021}. 
Thus we derive from \eqref{weaklam6}, \eqref{pasta4} and {\bf C3} that 
\begin{equation}\label{pasta5}
\zeta_{\ep} = \pi(\varphi_{\ep}) 
\quad \mbox{a.e.\ in}\ \Omega\times(0, T). 
\end{equation}
Moreover, combining \eqref{pasta3} and \eqref{pasta5} implies that 
\begin{equation}\label{pasta6}
\xi_{\ep} \in \beta(\varphi_{\ep}) 
\quad \mbox{a.e.\ in}\ \Omega\times(0, T). 
\end{equation}
Therefore we can prove existence of weak solutions to \ref{Pep} 
by \eqref{weaklam1}--\eqref{pasta2}, \eqref{pasta5}, \eqref{pasta6}, {\bf C8}. 
Moreover, we can confirm the second half of Theorem \ref{maintheorem2} 
in the same way as in the proof of uniqueness for \ref{Peplam}.
\qed
\end{prth1.2}

\vspace{10pt}

\section{Proof of Theorem \ref{maintheorem3}}\label{Sec5}

In this section we will prove Theorem \ref{maintheorem3}. 
\begin{lem}\label{estiep1}
There exists a constant $C > 0$ such that 
\begin{align*}
&\|\mu_{\ep}\|_{L^2(0, T; W)} 
+ \|\varphi_{\ep}\|_{L^{\infty}(0, T; V_{\ep})} 
+ \|\varphi_{\ep}\|_{H^1(0, T; H)} 
+ \|\theta_{\ep}\|_{L^2(0, T; V)} 
\\ 
&+ \|B_{\ep}(\varphi_{\ep})\|_{L^2(0, T; H)} 
+ \|\xi_{\ep}\|_{L^2(0, T; H)} 
+ \|(\ln\theta_{\ep})_{t}\|_{L^2(0, T; V^{*})}
+ \|\ln\theta_{\ep}\|_{L^{\infty}(0, T; H)} 
\leq C
\end{align*}
for all $\ep \in (0, 1)$. 
\end{lem}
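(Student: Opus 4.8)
The plan is to read off every bound in the statement from the $\lambda$-level estimates already obtained, by letting $\lambda \searrow 0$. For a fixed $\ep \in (0,1)$ I would keep the subsequence $\{\lambda_j\}$ used in the proof of Theorem \ref{maintheorem2} and exploit that the constants in Lemmas \ref{estieplam1}--\ref{estieplam5} are uniform in both $\ep \in (0,1)$ and $\lambda \in (0,1)$, so that weak (resp.\ weak-$*$) lower semicontinuity of norms transfers these bounds to the limit functions with an $\ep$-independent constant. Explicitly: \eqref{weaklam4} together with Lemma \ref{estieplam3} gives $\|\mu_{\ep}\|_{L^2(0,T;W)} \le C$; \eqref{weaklam3} together with Lemma \ref{estieplam1} gives $\|\varphi_{\ep}\|_{L^{\infty}(0,T;V_{\ep})} + \|\varphi_{\ep}\|_{H^1(0,T;H)} \le C$; \eqref{weaklam2} together with Lemma \ref{estieplam1} gives $\|\theta_{\ep}\|_{L^2(0,T;V)} \le C$; \eqref{weaklam5} together with Lemma \ref{estieplam4} gives $\|\xi_{\ep}\|_{L^2(0,T;H)} \le C$; and \eqref{weaklam1}, \eqref{pasta2} together with Lemmas \ref{estieplam2} and \ref{estieplam5} give $\|(\ln\theta_{\ep})_{t}\|_{L^2(0,T;V^{*})} + \|\ln\theta_{\ep}\|_{L^{\infty}(0,T;H)} \le C$, since $\bigl(\mbox{\rm Ln$_{\lambda}$}(\theta_{\ep,\lambda})\bigr)_{t} \to (\ln\theta_{\ep})_{t}$ weakly in $L^2(0,T;V^{*})$ and $\mbox{\rm Ln$_{\lambda}$}(\theta_{\ep,\lambda}) \to \ln\theta_{\ep}$ weakly-$*$ in $L^{\infty}(0,T;H)$ in view of \eqref{weaklam1} and \eqref{pasta2}.

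The one term needing a separate word is $\|B_{\ep}(\varphi_{\ep})\|_{L^2(0,T;H)}$, since Lemma \ref{estieplam4} controls the sum $-\lambda\Delta\varphi_{\ep,\lambda} + B_{\ep}(\varphi_{\ep,\lambda})$, not $B_{\ep}(\varphi_{\ep,\lambda})$ alone. Here I would invoke the identification already carried out in the proof of Theorem \ref{maintheorem2}: by \eqref{weaklam7} and \eqref{pasta1} the weak $L^2(0,T;H)$-limit of $-\lambda\Delta\varphi_{\ep,\lambda} + B_{\ep}(\varphi_{\ep,\lambda})$ equals $\eta_{\ep} = B_{\ep}(\varphi_{\ep})$ (using Lemma \ref{PL1}\,(v), \eqref{weaklam3}, and $\lambda^{1/2}\|\nabla\varphi_{\ep,\lambda}\|_{L^{\infty}(0,T;H)} \le C$ from Lemma \ref{estieplam1}, which forces $-\lambda\Delta\varphi_{\ep,\lambda} \to 0$ in $L^2(0,T;V^{*})$); then weak lower semicontinuity of the $L^2(0,T;H)$-norm applied to \eqref{weaklam7}, combined with the bound of Lemma \ref{estieplam4}, yields $\|B_{\ep}(\varphi_{\ep})\|_{L^2(0,T;H)} \le C$.

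I expect the only real difficulty to be bookkeeping rather than new analysis: one must check that none of the constants in Lemmas \ref{estieplam1}--\ref{estieplam5} hides an $\ep$-dependence. This holds because the ingredients used there are $\ep$-independent --- the norm equivalence $C_{*}(\|\nabla w\|_{H}^2 + \|w\|_{L^2(\Gamma)}^2) \le \|w\|_{V}^2 \le C^{*}(\|\nabla w\|_{H}^2 + \|w\|_{L^2(\Gamma)}^2)$ from \cite{N1967}, the embedding $\|\varphi\|_{V_{\ep}} \le C\|\varphi\|_{V}$ with $\ep$-independent $C$ from Lemma \ref{PL2}, the coercivity estimate for $\beta_{\lambda}$ from \cite{GMS-2009}, the Poincar\'e--Wirtinger inequality, and the data assumptions {\bf C5}--{\bf C8} (in particular the uniform bound on $\int_{\Omega}\int_{\Omega} K_{\ep}(x,y)|\varphi_{0,\ep}(x) - \varphi_{0,\ep}(y)|^2\,dxdy$ built into {\bf C7}, i.e.\ on $E_{\ep}(\varphi_{0,\ep})$) are all $\ep$-independent. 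Since the $\lambda$-level bounds and the convergences \eqref{weaklam1}--\eqref{pasta6} are already available, no new energy identity has to be set up; the statement follows purely by passing to the limit and lower semicontinuity.
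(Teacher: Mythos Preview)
Your proposal is correct and follows exactly the route indicated by the paper: the paper's proof is the single sentence ``We can obtain this lemma by Lemmas \ref{estieplam1}--\ref{estieplam5},'' and what you have written is precisely the detailed unpacking of that sentence, namely passing the $\ep$- and $\lambda$-uniform bounds from the approximate level to the limit via the convergences \eqref{weaklam1}--\eqref{pasta2} and weak/weak-$*$ lower semicontinuity of the relevant norms. Your extra care with the $B_{\ep}(\varphi_{\ep})$ term (going through \eqref{weaklam7} and \eqref{pasta1}) is appropriate and matches how that identification is handled in the proof of Theorem~\ref{maintheorem2}.
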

\begin{proof}
We can obtain this lemma 
by Lemmas \ref{estieplam1}-\ref{estieplam5}. 
\end{proof}

\bigskip 

\begin{prth1.3}
Lemma \ref{estiep1} yields that 
there exist some functions $u$, $\theta$, $\mu$, $\varphi$, $\xi$, $\eta$ 
such that 
\begin{align*}
    &\theta \in L^2(0, T; V),\ 
      u \in H^1(0, T; V^{*}) \cap L^{\infty}(0, T; H), \\ 
    &\mu \in L^2(0, T; W), \\ 
    &\varphi \in H^1(0, T; H) \cap L^{\infty}(0, T; H), \\
    &\eta, \xi \in L^2(0, T; H)
    \end{align*} 
and 
\begin{align}
&\ln\theta_{\ep} \to u 
\quad \mbox{weakly$^*$ in}\ H^1(0, T; V^*) \cap L^{\infty}(0, T; H), 
\label{weakep1} 
\\[1.5mm] 
&\ln\theta_{\ep} \to u 
\quad \mbox{strongly in}\ C([0, T]; V^*), 
\label{strongep1} 
\\[1.5mm] 
&\theta_{\ep} \to \theta 
\quad \mbox{weakly in}\ L^2(0, T; V), 
\label{weakep2} 
\\[1.5mm]
&\varphi_{\ep} \to \varphi 
\quad \mbox{weakly$^*$ in}\ H^1(0, T; H) \cap L^{\infty}(0, T; H), 
\label{weakep3} 
\\[1.5mm]
&\varphi_{\ep} \to \varphi 
\quad \mbox{strongly in}\ C([0, T]; V^*), 
\label{strongep2}
\\[1.5mm]
&B_{\ep}(\varphi_{\ep}) \to \eta 
\quad \mbox{weakly in}\ L^2(0, T; H), 
\label{weakep4}
\\[1.5mm]
&\mu_{\ep} \to \mu 
\quad \mbox{weakly in}\ L^2(0, T; W), 
\label{weakep5}
\\[1.5mm]
&\xi_{\ep} \to \xi 
\quad \mbox{weakly in}\ L^2(0, T; H) 
\label{weakep6}
\end{align}
as $\ep = \ep_{j} \searrow 0$. 
We see from \eqref{strongep1} and \eqref{weakep2} that 
\begin{align*}
&\int_{0}^{T} (\ln \theta_{\ep}(s), \theta_{\ep}(s))_{H}\,ds 
\notag \\
&= \int_{0}^{T} \left\langle 
                    \ln \theta_{\ep}(s), \theta_{\ep}(s) 
                    \right\rangle_{V^{*}, V}\,ds 
\notag \\ 
&\to \int_{0}^{T} \langle u(s), \theta(s) \rangle_{V^{*}, V}\,ds 
= \int_{0}^{T} (u(s), \theta(s))_{H}\,ds 
\end{align*}
as $\ep = \ep_{j} \searrow 0$, 
and then it holds that 
\begin{equation}\label{pizza1}
u = \ln \theta  \quad \mbox{a.e.\ in}\ \Omega\times(0, T)
\end{equation}
(see, e.g., \cite[Lemma 1.3, p.\ 42]{Barbu1}). 
We can verify that 
\begin{equation}\label{pizza2}
\varphi_{\ep} \to \varphi 
\quad \mbox{strongly in}\ C([0, T]; H) 
\end{equation}
as $\ep = \ep_{j} \searrow 0$ 
by Lemmas \ref{PL3}, \ref{estiep1} and \eqref{strongep2}. 
It follows from \eqref{weakep6} and \eqref{pizza2} that 
\[
\int_{0}^{T} (\xi_{\ep}(s), \varphi_{\ep}(s))_{H}\,ds 
\to \int_{0}^{T} (\xi(s), \varphi(s))_{H}\,ds 
\] 
as $\ep = \ep_{j} \searrow 0$, 
whence we have 
\begin{equation}\label{pizza3}
\xi \in \beta(\varphi)  \quad \mbox{a.e.\ in}\ \Omega\times(0, T). 
\end{equation}
Since $\{\varphi_{\ep}\}_{\ep \in (0, 1)}$ is bounded in $L^{\infty}(0, T; V_{\ep})$ 
by Lemma \ref{estiep1}, 
we can obtain 
\begin{equation}\label{pizza4}
\varphi \in L^{\infty}(0, T; V)
\end{equation}
by the Ponce criterion \cite[Theorem 1.2]{P2004}. 
Thanks to Lemmas \ref{PL1} and \ref{PL2}, 
we can show  
\begin{align}\label{pizza5}
\eta = - \Delta \varphi \quad \mbox{a.e.\ in}\ \Omega\times(0, T),  
\qquad 
\partial_{\nu}\varphi = 0  \quad \mbox{a.e.\ on}\ \Gamma\times(0, T)
\end{align}
in reference to \cite[Section 5.1]{DST2021}. 
Therefore we can prove Theorem \ref{maintheorem3} 
by \eqref{weakep1}--\eqref{pizza5}, {\bf C3}, {\bf C7}. 
\qed
\end{prth1.3}



%
%
%

\end{document}